\journal{Journal of Functional Analysis}
\let\embed=\hookrightarrow
\def\R{\mathbb{R}}
\def\N{\mathbb{N}}
\def\d{{\fam0 d}}
\def\df#1{{\it #1}\/}
\def\upto{\uparrow}
\let\embed=\hookrightarrow
\def\eprec{\prec\kern-3pt\prec}
\def\esucc{\succ\kern-3pt\succ}
\DeclareMathOperator*{\esssup}{ess\,sup}
\def\Tr{\mathop{{\fam0 Tr}}\nolimits}
\def\dist{\mathop{{\fam0 dist}}\nolimits}
\newtheorem{theorem}{Theorem}[section]
\newtheorem*{theoremA}{Theorem A}
\newtheorem*{theoremB}{Theorem B}
\newtheorem*{theoremC}{Theorem C}
\newtheorem{lemma}[theorem]{Lemma}
\newtheorem{proposition}[theorem]{Proposition}
\newdefinition{remark}[theorem]{Remark}
\newdefinition{example}[theorem]{Example}
\newproof{pfoof}{Proof}
\numberwithin{equation}{section}
\let\bar\overline
\begin{document}

\begin{frontmatter}

\title{Optimal Orlicz domains in Sobolev embeddings into Marcinkiewicz spaces}

\author{V\'\i t Musil}
\address{Department of Mathematical Analysis, Faculty of Mathematics and Physics, Charles
University, Sokolovsk\'a 83, 186 75 Praha 8, Czech Republic}
\ead{musil@karlin.mff.cuni.cz}

\begin{abstract}
In this paper we present a method for determining whether there exists a largest
Orlicz space $L^A(\Omega)$ satisfying the Sobolev embedding
$$W^mL^A(\Omega) \embed Y(\Omega)$$
where $Y(\Omega)$ stands for an arbitrary so-called Marcinkiewicz endpoint space.
The tool developed in this work enables us to investigate the optimality of Orlicz
domain spaces in Sobolev embeddings and also in Sobolev trace embeddings on
domains $\Omega$ in $\R^n$ with various regularity.
\end{abstract}

\begin{keyword}
Optimal Orlicz domain \sep
Sobolev embedding \sep
Orlicz space \sep
Marcinkiewicz space
\MSC[2010]
46E30 \sep
46E35 \sep
47G10 \sep
26D10 \sep
\end{keyword}

\end{frontmatter}


\section*{How to cite this paper}
\noindent
V. Musil, Optimal Orlicz domains in Sobolev embeddings into Marcinkiewicz
spaces, \emph{J. Funct. Anal}. 270 (2016), no. 7, 2653-2690.
\begin{center}
	\url{http://doi.org/10.1016/j.jfa.2016.01.019}
\end{center}

\section{Introduction and main results}

\hyphenation{Mar-cin-kie-wicz}
\def\domain{0,1}
\def\ast{^*}

For a given Banach function space $Y(\Omega)$, we study the question whether there exists
an optimal (i.e.\ largest) Orlicz space $L^A(\Omega)$
satisfying the embedding
$$
	W^mL^A(\Omega) \embed Y(\Omega),
$$
where $\Omega$ stands for a bounded domain in $\R^n$, $n\ge 2$, and
$W^mL^A(\Omega)$ is an Orlicz-Sobolev space
(for the definition see the section below).
By optimality we mean that the
space $L^A(\Omega)$ cannot be replaced by any strictly bigger Orlicz space, i.e.,
every embedding of an Orlicz-Sobolev space to~$Y(\Omega)$ factorizes through
the space~$W^mL^A(\Omega)$.

In the general setting of rearrangement-invariant (r.i.)\ Banach function
spaces, such questions were investigated using the method of reducing the
Sobolev embeddings to the boundedness of an appropriate modification of the
weighted Hardy operator.  In the setting of r.i.\ spaces, the optimal domain
and the optimal target spaces are then explicitly described (see~\cite{CPS},
\cite{EKP}, \cite{KP}, \cite{KP2}).

However, for certain specific applications such as to the solution of partial
differential equations, it is often useful to investigate the optimality of
spaces in Sobolev-type embeddings restricted to the context of Orlicz spaces.
This creates a difficult and important problem that has been studied by several
authors (see e.g.~\cite{AC1}, \cite{AC2}, \cite{AC3}, \cite{AC}, \cite{CP3}, \cite{CPS},
\cite{G}, \cite{HMT}). In particular, the
situation in this setting is significantly different than in the broader sense of
r.i.\ spaces.

Consider, for instance, the well-known classical Sobolev embedding
$W^1L^p(\Omega)\embed L^{p\ast}(\Omega)$, where $1<p<n$, $p^*=np/(n-p)$ and
$\Omega$ has a Lipschitz boundary. Both the spaces $L^p(\Omega)$ and
$L^{p\ast}(\Omega)$ that appear in this embedding are clearly optimal in the
context of Lebesgue spaces, the former as the domain and the latter as the
range. It turns out that they are optimal even in the broader context of Orlicz
spaces, but that is a deeper observation and more difficult to prove. The
optimality of the range space $L^{p\ast}(\Omega)$ follows from a general
result of A. Cianchi \cite{AC}. On the other hand, the optimality of the
domain space $L^p(\Omega)$ has not been known so far and will follow from
our more general statement below (Example~\ref{ex:extension_to_Y}).

In the limiting case when $p=n$, the situation is different and more
interesting. First, if we fix the domain space $L^n(\Omega)$, then there is
no optimal range Lebesgue space $L^q(\Omega)$ that would render the
embedding $W^{1}L^n(\Omega)\embed L^{q}(\Omega)$ true, because it holds
for every $q<\infty$, but not for $q=\infty$. This discrepancy
was remedied in the 1960s by a clever use of special Orlicz spaces of an
exponential type. In particular, by now classic results of
N.\,S.\,Trudinger, S.\,I.\,Pokhozhaev, and V.\,I.\,Yudovich (see \cite{Po},
\cite{T}, \cite{Yu}), one has
$$
W^{1}L^n(\Omega)\embed \exp L^{n'}(\Omega),
$$
where $n'=n/(n-1)$. Now, both the domain space $L^{n}(\Omega)$ and the range
space $\exp L^{n'}(\Omega)$ are Orlicz spaces, and therefore we may ask,
again, about their optimality. It turns out that, while the target space is the
optimal (that means smallest) Orlicz space that renders this Sobolev embedding
true (this was originally proved by J.\,A.\,Hempel, G.\,R.\,Morris and
N.\,S.\,Trudinger \cite{HMT} and it also follows from a general result of A.\
Cianchi \cite{AC}), the domain space is not. Rather surprisingly, it can even be
shown that such an optimal Orlicz domain space does not exist at all. More
precisely, given an Orlicz space $L^{A}(\Omega)$ such that
$W^{1}L^A(\Omega)\embed \exp L^{n'}(\Omega)$, there always exists another
Orlicz space $L^{B}(\Omega)$, strictly bigger than $L^{A}(\Omega)$ such
that $W^{1}L^B(\Omega)\embed \exp L^{n'}(\Omega)$. This result was shown in
\cite{Bonn}.

It is clear from these examples that even the very existence of an optimal
Orlicz partner (either range or domain) is highly nontrivial and very
interesting. However, the question of existence (and, possibly,
characterization) of an optimal Orlicz domain partner, is of interest also in a
more general situation when the given target space is not necessarily an Orlicz
space. 

For instance, one has the embedding
$$
	W^1L^p(\Omega)\embed L^{p\ast,\,p}(\Omega),
$$
(see e.g.~\cite{Hu}, \cite{Ma}, \cite{ON}, \cite{Pe}) in which the target
space is a usual two-parameter Lorentz space. Moreover, it is known that $
L^{p\ast,\,p}(\Omega)$ is  the optimal r.i.\ range space in this embedding and the
space $L^p(\Omega)$ is the optimal r.i.\ domain space (see~\cite{EKP}
or~\cite{CPS}). Therefore, $L^p(\Omega)$ is automatically also the optimal
Orlicz space in this embedding.

On the other hand, when we start with the space $L^\infty(\Omega)$ at the
position of the range space, then, again, as A.\,Cianchi and L.\,Pick showed in
\cite{CP}, an optimal Orlicz space does not exist at all. This situation
resembles the above-mentioned embedding in which the target was the space $\exp
L^{n'}(\Omega)$. Apart from these two very particular cases the question of the existence
of an optimal Orlicz space has been open.

The general question of optimality among the Orlicz spaces has already been
studied (see \cite{AC3}, \cite{AC}, \cite{CP2}, \cite{CP3}, \cite{CPS},
\cite{G}) however, all those papers focus on the optimality of target spaces. In
the case of range, it turns out that the answer is always  affirmative, and,
furthermore, an explicit description of the optimal Orlicz space is available.
The situation is however dramatically different when the target space is fixed
and the optimality of the domain space is in question.

In this paper we study this question in the special case when the target space
is chosen from the class of the so-called Marcinkiewicz endpoint spaces. This
is not as restrictive as it may seem since the most customary cases
including those given by the previous examples are covered. 

An important ingredient of our approach is the use of known reduction theorems
(see \citep[Theorems 6.1 and 6.4]{CPS} and \citep[Theorem 1.3]{CP2}).
This method will enable us to circumvent working with Sobolev spaces to
consider instead the boundedness of the operator
$$
	\bigr(H_{\alpha}^{\beta} f\bigr)(t) := \int_{t^\beta}^1 f(s)\,s^{\alpha - 1}\,\d s,\quad t\in(0,1),
$$
in one dimension. Here $0<\alpha<1$, $0<\beta<\infty$ and $\alpha+{1/\beta}
\ge 1$.  Then, by using various special cases of $\alpha$ and $\beta$ we obtain
applications not only to Sobolev embeddings but also to the trace Sobolev
embeddings of different orders and on various domains in $\R^n$ at once.

Now we are in a position to state our main result which gives a complete
characterization of when the optimal Orlicz domain exists, and also its
explicit description.  Simply put, to a given Marcinkiewicz endpoint space
$M(\Omega)$ we construct an ``optimal Orlicz candidate'' $L^B(\Omega)$ in terms
of the fundamental function. We exploit the fact that to a given fundamental
function there always exists a uniquely defined Orlicz space. Next, we test
whether the embedding $W^mL^B(\Omega)\embed M(\Omega)$ holds. If so, then we
show that $L^B(\Omega)$ is the optimal Orlicz domain. Otherwise, we can prove
that an optimal Orlicz domain does not exist at all. The general result reads
as follows.

	\begin{theoremA} \label{thm:characterization}
		Let $0<\alpha<1$, $\beta>0$, $\alpha+1/\beta\ge 1$
		and let $M(\domain)$ be a Marcinkiewicz endpoint space
		with a fundamental function $\varphi$ satisfying
		$$
			\sup_{0<t<1} \varphi(t^{1\over\beta})\, t^{{\alpha}-1}=\infty.
		$$
		Let $X(\domain)$ be the largest r.i.\ space satisfying
		$$
			H_\alpha^\beta\colon X(\domain) \to M(\domain).
		$$
		Denote by $L^B(\domain)$ the Orlicz space having the same fundamental
		function as the space $X(\domain)$.
		Then the following statements are equivalent.
		\begin{enumerate}[(i)]
			\item There exists a largest Orlicz space $L^A(0,1)$ satisfying the relation
			$$
				H_\alpha^\beta\colon L^A(\domain) \to M(\domain);
			$$
			\item
			$$
				H_\alpha^\beta\colon L^B(\domain) \to M(\domain);
			$$
			\item
			$$
				L^B(\domain) \subseteq X(\domain);
			$$
			\item
			$$
				S_\alpha\colon L^{\widetilde{B}}(0,1) \to L^{\widetilde{B}}(0,1),
			$$
			where $S_\alpha$ is the operator given by
			$$
				\bigl(S_\alpha f\bigr)(t) := t^{\alpha - 1} \sup_{0<s<t} s^{1-\alpha}\,f^*(s),
					\quad t\in(0,1);
			$$

			\item there exists some $K\ge 1$ such that
			$$
				\int_1^t {\widetilde{B}(s)\over s^{{1/(1-\alpha)}+1}}\,\d s
					\lesssim {\widetilde{B}(Kt) \over t^{{1/(1-\alpha)}}},
						\quad t\in (2,\infty).
			$$

			\noindent Moreover, if $\widetilde{B}$ satisfies the $\Delta_2$ condition, then each of the conditions
			{\rm (i)--(v)} is equivalent to the following statement:

			\item there exists some $K\ge 1$ such that
			$$
				\limsup_{t\to\infty} {\widetilde{B}(t)\over \widetilde{B}(Kt)} < K^{{1\over \alpha -1}}.
			$$
		\end{enumerate}
		\end{theoremA}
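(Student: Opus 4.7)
The plan is to prove a cyclic chain of implications, grouped into three phases: first the structural equivalences (i)$\Leftrightarrow$(ii)$\Leftrightarrow$(iii), then the analytic bridge (ii)$\Leftrightarrow$(iv), and finally the Young-function reformulations (iv)$\Leftrightarrow$(v)$\Leftrightarrow$(vi).

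\textbf{Structural block.} The first three conditions are essentially bookkeeping about fundamental functions. Since $X(\domain)$ is by construction the largest r.i.\ space with $H_\alpha^\beta\colon X\to M$, one has $H_\alpha^\beta\colon L^B\to M$ iff $L^B\subseteq X$, which is (ii)$\Leftrightarrow$(iii). For (ii)$\Rightarrow$(i), any Orlicz $L^A$ admitting the boundedness lies in $X$, so its fundamental function dominates $\varphi_X=\varphi_{L^B}$; since an Orlicz space is determined (up to equivalence of Young functions) by its fundamental function, this forces $L^A\embed L^B$, making $L^B$ the largest. For (i)$\Rightarrow$(ii), I would argue by contradiction on the hypothetical largest Orlicz $L^{A_0}$: if $\varphi_{A_0}\not\simeq\varphi_X$, a construction of a ``dilated'' Young function (inflating the gap between $\varphi_{A_0}$ and $\varphi_X$) produces an Orlicz space strictly larger than $L^{A_0}$ yet still inside $X$, contradicting maximality; hence $\varphi_{A_0}\simeq\varphi_B$ and $L^{A_0}=L^B$.

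\textbf{Analytic step.} For (ii)$\Leftrightarrow$(iv) the plan is to dualize. The associate space of the Marcinkiewicz endpoint $M(\domain)$ is a Lorentz endpoint space $\Lambda_\varphi$. Hence $H_\alpha^\beta\colon L^B\to M$ is equivalent, via H\"older's inequality and the Hardy-Littlewood-P\'olya principle, to the adjoint being bounded from $\Lambda_\varphi$ into $L^{\widetilde{B}}$. A change of variable $t\mapsto t^{1/\beta}$ and rearranging the fundamental-function weight produce precisely the supremum weighted by $s^{1-\alpha}$ that appears in $S_\alpha$; the hypothesis $\sup_t \varphi(t^{1/\beta})\,t^{\alpha-1}=\infty$ is crucial here to guarantee that the sup-type operator $S_\alpha$ is indeed the correct dual object (rather than a bounded truncation of it) and to rule out trivialization at the endpoints.

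\textbf{Young-function reformulations.} For (iv)$\Leftrightarrow$(v), I would pass to the Luxemburg modular and test against the extremal function $f^*(s)=s^{\alpha-1}\chi_{(0,\tau)}(s)$ which saturates the supremum defining $S_\alpha$. Using the standard identity $B^{-1}(t)\,\widetilde{B}^{-1}(t)\simeq t$ and the change of variable $u=1/s$, the modular inequality for $S_\alpha f$ rearranges into the integral estimate (v). For (v)$\Leftrightarrow$(vi) under $\Delta_2$, the $\Delta_2$-condition permits the classical dyadic comparison between the integral $\int_1^t \widetilde{B}(s)\,s^{-1/(1-\alpha)-1}\,\d s$ and its integrand at the endpoint, yielding (vi); the converse follows by summing a geometric series on dyadic pieces, exactly as in the classical Boyd-index type calculations.

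\textbf{Main obstacle.} The analytic step (ii)$\Leftrightarrow$(iv) will be the heart of the matter: it converts an abstract reduction theorem for $H_\alpha^\beta$ into a concrete, intrinsically Orlicz property of $S_\alpha$. The delicate point is to simultaneously duality-transform the Marcinkiewicz endpoint structure of $M$, match it with the Orlicz duality $L^B\leftrightarrow L^{\widetilde{B}}$, and exploit the divergence assumption on $\varphi$ so that the sup-type structure in $S_\alpha$ emerges in its exact form; any sloppiness in one of these three matchings would yield the wrong operator and hence the wrong Young-function condition.
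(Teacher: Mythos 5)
Your structural block is sound and matches the paper where it overlaps: (ii)$\Leftrightarrow$(iii) is immediate from the optimality of $X(0,1)$, and your argument that any admissible $L^A$ satisfies $\varphi_{L^B}=\varphi_X\lesssim\varphi_{L^A}$, hence $L^A\subseteq L^B$ (fundamental-function domination does give inclusion for Orlicz spaces over a finite measure space), is exactly the paper's proof of (iii)$\Rightarrow$(i). The genuine gap is your (i)$\Rightarrow$(ii). You propose: if the hypothetical largest space $L^{A_0}$ has $\varphi_{A_0}\not\simeq\varphi_X$, build a ``dilated'' Young function whose Orlicz space is strictly larger than $L^{A_0}$ ``yet still inside $X$''. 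But membership of an Orlicz space in $X(0,1)$ is not governed by its fundamental function --- that is precisely the dichotomy the theorem is about, since $L^B$ itself has fundamental function $\varphi_X$ and may nevertheless fail to lie in $X$. Inflating the gap between $\varphi_{A_0}$ and $\varphi_X$ gives you no mechanism whatsoever to verify that $H_\alpha^\beta$ remains bounded on the enlarged space, so the contradiction never materializes. In the paper this implication is the technical heart: Theorem~B first converts $H_\alpha^\beta\colon L^A\to M$ into the integral inequality $\int_1^t \widetilde{A}(s)s^{-1/(1-\alpha)-1}\,\d s\lesssim \widetilde{B}(Ct)t^{-1/(1-\alpha)}$, and then Theorem~\ref{thm:construct} uses the \emph{failure of (v)} (not a fundamental-function gap) to modify $\widetilde{A}$ linearly on carefully chosen intervals $(t_j,\tau_j)$, producing $A_1\esucc A$ that still satisfies the inequality; the remaining case of bounded $G$, i.e.\ $M=L^\infty$, needs the separate construction cited from \cite{CP}. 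None of this is replaceable by a fundamental-function dilation.

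Two further steps are described with the wrong mechanism. For (ii)$\Leftrightarrow$(iv): dualizing $H_\alpha^\beta$ yields another \emph{integral} operator $t^{\alpha-1}\int_0^{t^{1/\beta}}f^*(s)\,\d s$ acting from $M'=\Lambda$ into $L^{\widetilde{A}}$; no change of variables turns this into the supremum operator $S_\alpha$. In the paper (iv) enters only through the equivalence with the Young-function condition (v), quoted from \citep[Theorem A]{KPP}, while (ii)$\Leftrightarrow$(v) is Theorem~B --- whose proof itself needs Theorem~C (the formula for $\varphi_X$), Lemma~\ref{lemm:YoungB} (constructing $B$ and identifying $\widetilde{B}^{-1}$), Lemma~\ref{lemm:function_E_properties_i}, and the reduction to characteristic functions of Proposition~\ref{prop:Lambda_space_embedding}; your sketch bypasses all of this, yet without an explicit handle on $\widetilde{B}$ neither (v) nor (vi) can be reached. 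Likewise, testing the modular on the single extremal $f^*(s)=s^{\alpha-1}\chi_{(0,\tau)}(s)$ gives only the necessity half of (iv)$\Leftrightarrow$(v). Your dyadic argument for (v)$\Leftrightarrow$(vi) under $\Delta_2$ is plausible and close in spirit to the paper's Theorem~\ref{thm:eqcondition} (which uses a L'H\^opital-type lemma), but as a whole the proposal is missing the decisive construction behind (i)$\Rightarrow$(ii)/(v) and misidentifies how $S_\alpha$ arises.
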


Note that the condition on $\varphi$ causes no loss of generality, since
otherwise $H_\alpha^\beta\colon L^1(0,1) \to M(0,1)$. The details are discussed
in Remark~\ref{rem:wlog}.

The proof of Theorem~A relies on the next result of independent interest, which provides us with a
reduction theorem for Orlicz and Marcinkiewicz spaces.

	\begin{theoremB} \label{thm:Reduction_Orl_Marc}
		Let $0<\alpha<1$, $\beta>0$, $\alpha+1/\beta\ge 1$
		and let $L^A(\domain)$ be an Orlicz space with a
		Young function $A$ and $M(\domain)$ be a Marcinkiewicz endpoint space
		with a fundamental function $\varphi$ satisfying
		$$
			\sup_{0<t<1} \varphi(t^{1\over\beta})\, t^{{\alpha}-1}=\infty.
		$$
		Then the relation
		$$
			H_\alpha^\beta\colon L^A(\domain) \to M(\domain)
		$$
		holds if and only if there exists $C>0$ such that
		$$
			\int_1^t {\widetilde{A}(s)\over s^{{1/(1-\alpha)}+1}}\,\d s
				\lesssim {\widetilde{B}(Ct)\over t^{1/(1-\alpha)}},
					\quad t\in(2,\infty),
		$$
		where $B$ is a Young function described in Theorem~A
		and $\widetilde{A}$ and $\widetilde{B}$ are complementary Young functions to $A$ and $B$
		respectively.
		\end{theoremB}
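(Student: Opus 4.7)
My plan is to translate the operator bound $H_\alpha^\beta\colon L^A(0,1)\to M(0,1)$ into a pointwise Luxemburg-norm estimate for a parametric kernel and then read off the stated integral condition via two successive changes of variables. Since both function spaces are rearrangement invariant and $H_\alpha^\beta$ preserves the class of non-negative non-increasing functions, it suffices to test on $f=f^*$. Using that the Marcinkiewicz-endpoint norm is equivalent to $g\mapsto \sup_{0<t<1}\varphi(t)\,g^{**}(t)$, Fubini's theorem produces
$$
(H_\alpha^\beta f)^{**}(t)=\int_0^1 f(u)\,k_t(u)\,\d u,\qquad k_t(u)=u^{\alpha-1}\,\frac{\min(t,u^{1/\beta})}{t}.
$$
By the sharp Hölder inequality in Orlicz spaces, $H_\alpha^\beta\colon L^A\to M$ is then equivalent to
$$
\sup_{0<t<1}\varphi(t)\,\|k_t\|_{L^{\widetilde{A}}(0,1)}\lesssim 1. \qquad (\dagger)
$$

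\textbf{From $(\dagger)$ to the integral condition.} Next I would compute $\|k_t\|_{L^{\widetilde{A}}}$ from the defining modular equation $\int_0^1\widetilde{A}(k_t/\lambda)\,\d u=1$. The hypothesis $\alpha+1/\beta\ge 1$ guarantees that the contribution of $k_t$ on $(0,t^\beta)$, where $k_t(u)=u^{\alpha-1+1/\beta}/t$, is controlled by its behaviour on $(t^\beta,1)$, where $k_t(u)=u^{\alpha-1}$. Substituting $s=u^{\alpha-1}$, which is a bijection $(t^\beta,1)\to(1,t^{-\beta(1-\alpha)})$, converts the modular equation into
$$
\frac{1}{1-\alpha}\int_1^{\,t^{-\beta(1-\alpha)}}\widetilde{A}\!\left(\frac{s}{\lambda}\right)\frac{\d s}{s^{1/(1-\alpha)+1}}\asymp 1,
$$
so that $\|k_t\|_{L^{\widetilde{A}}}$ is (up to constants) the smallest $\lambda$ for which the left-hand side is bounded. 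Renaming $\tau=t^{-\beta(1-\alpha)}$ recasts $(\dagger)$ as an inequality of the form
$$
\int_1^\tau\frac{\widetilde{A}(s)}{s^{1/(1-\alpha)+1}}\,\d s\lesssim \frac{\Phi(\tau)}{\tau^{1/(1-\alpha)}},\qquad \tau>2,
$$
where $\Phi$ encodes $\varphi$ transported through the substitution; the cut-off $\tau>2$ reflects the image of $(0,1)$ under $t\mapsto t^{-\beta(1-\alpha)}$.

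\textbf{Identifying $\Phi$ with $\widetilde{B}$.} The final step is to match $\Phi$ with the $\widetilde{B}$ supplied by Theorem~A. By construction, $L^B(0,1)$ has the same fundamental function as the largest r.i.\ domain $X(0,1)$ for $H_\alpha^\beta\colon X\to M$. The standard identity $\varphi_{L^B}(s)=1/\widetilde{B}^{-1}(1/s)$, together with the explicit expression for $\varphi_X$ furnished by the reduction theorems of \cite{CPS,CP2}, yields $\widetilde{B}^{-1}(1/s)\asymp s^{1-\alpha}\varphi(s^\beta)$. Inverting this relation identifies $\Phi(\tau)\asymp \widetilde{B}(C\tau)$ for a suitable $C\ge 1$, which is precisely the right-hand side of the claimed inequality. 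Putting the three steps together produces the stated equivalence in both directions.

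\textbf{Main obstacle.} The most delicate point is the rigorous passage between $(\dagger)$ and the integral formulation: one must check that the contribution of $k_t$ on $(0,t^\beta)$ is genuinely harmless, keep track of all multiplicative constants under the two successive substitutions, and absorb the boundary behaviour as $t\uparrow 1$ (equivalently $\tau$ close to $1$). A secondary subtlety is that, without a $\Delta_2$ assumption on $\widetilde{A}$, the Luxemburg norm and the modular cannot be compared by rescaling, so $\|k_t\|_{L^{\widetilde{A}}}$ must be extracted from the infimum definition directly; this is what fixes the presence of the constant $C$ inside $\widetilde{B}(Ct)$ rather than a genuine supremum over dilations.
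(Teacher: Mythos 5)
Your first two steps essentially reproduce the paper's argument in a different dress: condition $(\dagger)$ with the kernel $k_t(u)=u^{\alpha-1}\min(t,u^{1/\beta})/t$ is exactly the paper's reduction to characteristic functions after dualization, the domination of the piece on $(0,t^\beta)$ by the piece on $(t^\beta,1)$ is proved there for any r.i.\ norm, and the ``$\lambda$ inside $\widetilde A$'' issue that you flag is resolved exactly (not just up to $\Delta_2$) by the paper's Lemma on the function $E_\xi$, which expresses the Luxemburg norm of a pure power as $a^\xi/E_\xi^{-1}(1/a)$. Two small caveats there: sharp H\"older gives only the sufficiency half of $(\dagger)$; for the necessity you need the associate-norm characterization $\|k_t\|_{(L^{\widetilde A})}=\sup_{\|f\|_{L^A}\le1}\int f\,k_t$ together with the observation that $H_\alpha^\beta f$ is already nonincreasing for $f\ge0$, so $(H_\alpha^\beta f)^{**}(t)=\tfrac1t\int_0^t H_\alpha^\beta f$ (your reduction ``it suffices to test on $f=f^*$'' is neither needed for this nor justified by the reason you give).

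The genuine gap is in the step ``Identifying $\Phi$ with $\widetilde B$.'' What the substitutions actually produce is the pointwise condition $\varphi(\tau^{-1/\beta})\,\tau^{1-\alpha}\lesssim E_{\alpha-1}^{-1}(\tau)$, and the left-hand side is in general \emph{not} monotone, so it cannot simply be ``inverted'' into $E_{\alpha-1}(\tau)\lesssim\Phi(\tau)$ with $\Phi$ built pointwise from $\varphi$; nor is it equivalent to $\widetilde B^{-1}$. The missing idea is the passage to the least nondecreasing majorant: since $E_{\alpha-1}^{-1}$ is increasing, the condition is equivalent to $F(\tau)\lesssim E_{\alpha-1}^{-1}(\tau)$ with $F(\tau)=\sup_{0<s<\tau}\varphi(s^{-1/\beta})\,s^{1-\alpha}$, and it is this envelope $F$ (not the pointwise expression) that coincides up to constants with $\widetilde B^{-1}$, because by Theorem~C one has $\varphi_X(t)\simeq t\sup_{t<s<1}\varphi(s^{1/\beta})s^{\alpha-1}$ and the Young function $B$ built from this fundamental function satisfies $\widetilde B^{-1}(t)\simeq t\,\varphi_{L^B}(1/t)$ (note your identity $\varphi_{L^B}(s)=1/\widetilde B^{-1}(1/s)$ is the fundamental function of the \emph{associate} space; the correct one is $\varphi_{L^B}(s)=1/B^{-1}(1/s)$). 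Consequently your displayed relation $\widetilde B^{-1}(1/s)\asymp s^{1-\alpha}\varphi(s^\beta)$ is wrong both in the powers and, more seriously, in omitting the supremum, so ``inverting this relation'' does not yield $\Phi(\tau)\asymp\widetilde B(C\tau)$. One also has to check, using the hypothesis $\sup_{0<t<1}\varphi(t^{1/\beta})t^{\alpha-1}=\infty$, that the envelope is quasiconcave, strictly increasing and admits the Young-function construction, and to convert the multiplicative constant arising after inversion into the dilation constant $C$ inside $\widetilde B$; none of this is present in your sketch, and it is precisely where the stated $\widetilde B$-condition is actually obtained.
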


Our final principal result describes the fundamental function of the optimal r.i.\ domain space.

	\begin{theoremC} \label{thm:Fund_of_domain}
		Let $0<\alpha<1$, $\beta>0$, $\alpha+1/\beta\ge 1$.
		Suppose that $M(\domain)$ is the Marcinkiewicz endpoint space with fundamental function $\varphi$.
		Then the fundamental function $\varphi_X$ of the largest r.i.\ space $X(\domain)$ having the property
		$$
			H_{\alpha}^{\beta}\colon X(\domain)\to M(\domain)
		$$
		satisfies
		$$
			\varphi_X(t)\simeq t\sup_{t<s<1} \varphi(s^{1\over\beta})\,s^{\alpha-1},
				\quad t\in (0,1).
		$$
		\end{theoremC}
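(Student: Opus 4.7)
The plan is to reduce the computation of $\varphi_X$ to a direct calculation involving $H_\alpha^\beta$ applied to characteristic functions. By the reduction theorem from~\cite{CPS}, the optimal r.i.\ domain $X$ has a norm that, on nonincreasing functions, is essentially $\|H_\alpha^\beta f^*\|_M$, so
$$
	\varphi_X(t) = \|\chi_{(0,t)}\|_X \simeq \|H_\alpha^\beta \chi_{(0,t)}\|_M
$$
with constants independent of $t$. This reduces the problem to a concrete one-dimensional computation.

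First I would compute $H_\alpha^\beta \chi_{(0,t)}$ explicitly. Since $\chi_{(0,t)}(s)\,s^{\alpha-1}$ is nonzero on the integration range only when $r^\beta < s < t$,
$$
	\bigl(H_\alpha^\beta \chi_{(0,t)}\bigr)(r) = \frac{t^\alpha - r^{\alpha\beta}}{\alpha}\,\chi_{(0,\,t^{1/\beta})}(r),
$$
which is already nonincreasing and hence equal to its own decreasing rearrangement. Inserting this into the Marcinkiewicz endpoint norm and splitting the resulting supremum at $r = t^{1/\beta}$, the substitution $s = r^\beta$ converts the outer range into $s \in (t,1)$ and yields, up to constants, precisely $t\sup_{t<s<1} \varphi(s^{1/\beta})\,s^{\alpha-1}$; the inner range $r \le t^{1/\beta}$ contributes a quantity of the same order, which is absorbed into the first term by monotonicity of $\varphi$ together with the standing hypothesis $\alpha + 1/\beta \ge 1$.

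The main obstacle is securing \emph{both} directions of the asymptotic from this single test. The lower bound on $\varphi_X$ is immediate, since $\|H_\alpha^\beta \chi_{(0,t)}\|_M \le C\,\varphi_X(t)$ because $\chi_{(0,t)}$ has $X$-norm exactly $\varphi_X(t)$. For the reverse inequality, I would introduce an auxiliary r.i.\ space $Z$ whose fundamental function is $\psi(t) := t\sup_{t<s<1} \varphi(s^{1/\beta})\,s^{\alpha-1}$, verify that $\psi$ is quasiconcave using $\alpha + 1/\beta \ge 1$, check $H_\alpha^\beta\colon Z \to M$ by a direct Hardy-type estimate on nonincreasing functions, and finally invoke the optimality of $X$ to conclude $\varphi_X \lesssim \psi$. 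Combining the two bounds gives the claimed equivalence.
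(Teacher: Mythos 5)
Your argument rests on a false reduction. The largest r.i.\ domain space for $H_\alpha^\beta\colon X(0,1)\to M(0,1)$ is given (Proposition~\ref{prop:opt_ri_domain}) by the norm $\|f\|_{X}=\sup_{h\sim f}\|H_\alpha^\beta h\|_{M}$, where the supremum runs over all functions equimeasurable with $f$; this is \emph{not} equivalent to $\|H_\alpha^\beta f^*\|_{M}$, because $H_\alpha^\beta$ is not rearrangement-invariant in its argument: $\|H_\alpha^\beta\chi_E\|_{M}$ depends on where the set $E$ sits, not only on $|E|$. Hence your identification $\varphi_X(t)\simeq\|H_\alpha^\beta\chi_{(0,t)}\|_{M}$ fails, and so does the ensuing computation: your explicit formula for $H_\alpha^\beta\chi_{(0,t)}$ is correct, but its Marcinkiewicz norm is $\simeq t^{\alpha}\varphi(t^{1/\beta})$ (after the substitution the relevant weight is $\varphi(u)/u$, which is nonincreasing, so the supremum collapses at $u=t^{1/\beta}$), and this is in general strictly smaller than $\psi(t)=t\sup_{t<s<1}\varphi(s^{1/\beta})s^{\alpha-1}$. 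For instance, if $\beta(1-\alpha)<1$ and $\varphi(u)=u^\gamma$ with $\beta(1-\alpha)<\gamma\le 1$, then $X(0,1)=L^1(0,1)$ and $\varphi_X(t)\simeq t\simeq\psi(t)$, while $\|H_\alpha^\beta\chi_{(0,t)}\|_{M}\simeq t^{\alpha+\gamma/\beta}\ll t$. To recover $\psi$ one must test sets away from the origin: $\|H_\alpha^\beta\chi_{(s-t,s)}\|_{M}\gtrsim t\,s^{\alpha-1}\varphi(s^{1/\beta})$, and the supremum over the position $s$ is exactly what creates $\sup_{t<s<1}$. (The paper tests only $\chi_{(0,t)}$, obtaining $\varphi_X(t)\gtrsim t^\alpha\varphi(t^{1/\beta})$, and then upgrades this to $\varphi_X\gtrsim\psi$ by showing that $\psi$ is the least quasiconcave majorant of $t^\alpha\varphi(t^{1/\beta})$ and that $\varphi_X$ is quasiconcave — a step absent from your proposal.)

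The same oversight undermines your converse inequality. Boundedness of $H_\alpha^\beta$ from an r.i.\ space $Z$ into $M$ cannot be checked ``on nonincreasing functions,'' since $\|H_\alpha^\beta h\|_{M}$ is not controlled by $\|H_\alpha^\beta h^*\|_{M}$ (same counterexample). What is true, by Proposition~\ref{prop:Lambda_space_embedding}, is that for the Lorentz endpoint space $Z=\Lambda_\psi$ it suffices to test characteristic functions — but of \emph{arbitrary} measurable sets, so one must estimate $\|H_\alpha^\beta\chi_E\|_{M}$ uniformly over all sets of measure $t$, i.e.\ over all positions; this is precisely the three-region estimate in the paper's computation of $\sup_{h\sim\chi_{(0,t)}}\|H_\alpha^\beta h\|_{M}$, and it is also where the hypothesis $\alpha+1/\beta\ge1$ actually enters. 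Your strategy can be repaired along these lines, but only by reinstating the supremum over equimeasurable test functions that your initial reduction discards; as written, both halves of the argument prove bounds for the wrong quantity $t^\alpha\varphi(t^{1/\beta})$.
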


The paper is structured as follows. In Section~\ref{sec:fs} we collect all the
necessary basic background material. In Section~\ref{sec:BC} we prove
Theorem~B and Theorem~C. In
Section~\ref{sec:main} we prove Theorem~A. Finally,
Section~\ref{sec:app} contains various applications and examples of the main
result.

\section{Function spaces}\label{sec:fs}

Let us now fix the notation which will be used in this paper.

By $A \lesssim B$ and $A \gtrsim B$ we mean that $A \le C\,B$ and $A \ge C\,B$,
respectively, where $C$ is a positive constant independent of the appropriate
quantities involved in $A$ and $B$. We shall write $A \simeq B$ when both of
the estimates $A \lesssim B$ and $A \gtrsim B$ are satisfied. We shall use
the convention $0\cdot\infty=0$, ${0\over 0}=0$ and
${\infty\over\infty}=0$.

When $X$ and $Y$ are Banach spaces, we say that $X$ is \df{embedded} into $Y$, and
write $X\embed Y$, if $X \subseteq Y$ and there exists a positive constant $C$,
such that $\|f\|_Y \le C\, \|f\|_X$ for every $f\in X$.
We say that a linear operator $T$ defined on $X$ with values in $Y$ is \df{bounded}
if there exists a constant $C>0$ such that $\|Tf\|_Y \le C \|f\|_X$ for
every $f\in X$. We write $T\colon X\to Y$ in this case.

We say that a function $G\colon[0,\infty)\to(0,\infty)$ satisfies the
\df{$\Delta_2$ condition at infinity} if there exist $K>0$ and $T\ge 0$ such that $G(2t)\le
K\,G(t)$ for every $t\ge T$. We will use only $\Delta_2$ condition at infinity,
hence we shall shortly say $\Delta_2$ condition and write $G\in\Delta_2$. 

For a nonnegative function
$f$ we shall write $\int_0 f < \infty$ when there exists some $c>0$ such that
the integral $\int_0^c f$ converges.  By integral we always mean the Lebesgue
integral.

\subsection{Rearrangement-invariant spaces}

In this section we recall definitions and some basic facts concerning the
re\-arrange\-ment-invariant spaces, which we will need in the following text. We
shall not prove well-known results; all proofs and further details can be found in the
monograph by C.\,Bennett and R.\,Sharpley \cite{BS}.

Suppose $\Omega$ is a domain in $\R^n$. Let $\mathcal{M}(\Omega)$
be a class of real-valued measurable functions on $\Omega$ and $\mathcal{M}^+(\Omega)$
the class of nonnegative functions in $\mathcal{M}(\Omega)$. Given
$f\in\mathcal{M}$ we define its \df{nonincreasing rearrangement} on $(0,|\Omega|)$
as
$$
	f^*(t):=\inf\bigl\{\lambda>0,\;\mu_f(\lambda)\le t\bigr\},\quad 0<t<|\Omega|,
$$
where $\mu_f$ is the \df{distribution function} of $f$, i.e.,
$$
	\mu_f(\lambda):=\bigl|\bigl\{x\in\Omega,\;|f(x)|>\lambda\bigr\}\bigr|,\quad \lambda>0,
$$
where the $|\cdot|$ stands for the Lebesgue measure.
The \df{Hardy average} $f^{**}$ is defined on $(0,|\Omega|)$ as
$$
	f^{**}(t)={1\over t}\int_0^t f^*(s)\,\d s,\quad 0<t<|\Omega|.
$$
Let $f$, $g\in\mathcal{M}^+(\Omega)$. Then we have the \df{Hardy-Littlewood inequality}
$$
	\int_\Omega f(x)\,g(x)\,\d x \le \int_0^{|\Omega|} f^*(t)\,g^*(t)\,\d t.
$$

When $E\subseteq \Omega$ is measurable, we denote by $\chi_E$ the \df{characteristic
function} of $E$.
A \df{simple function} is a finite sum $\sum_j \lambda_j\chi_{E_j}$,
where $\lambda_j\ne 0$ is a real number and $E_j\subseteq\Omega$ has finite measure for
every index $j$.

Denote by $I$ the interval $(0,1)$.
A mapping $\rho\colon\mathcal{M}^+(I)\to[0,\infty]$ is called a
\df{re\-arr\-ange\-ment-invariant (r.i.)\ Banach function norm } on $\mathcal{M}^+(I)$, if for
all $f$, $g$, $f_n$ ($n\in\mathbb{N}$) in $\mathcal{M}^+(I)$, for all constants $a\ge 0$ and
for every measurable subset $E$ of $I$, the following properties hold:
	\makeatletter
	\tagsleft@true
	\makeatother
\begin{align*}
	& \rho(f) = 0 \;\leftrightarrow\; f=0\;\hbox{\fam0 a.e.};\;
	  \rho(af) = a\rho(f);\;
	  \rho(f+g) \le \rho(f)+\rho(g);\tag{P1}\\
	&0\le f\le g\;\hbox{\fam0 a.e.} \;\text{implies}\; \rho(f)\le\rho(g);\tag{P2}\\
	&0\le f_n\upto f\;\hbox{\fam0 a.e.} \;\text{implies}\; \rho(f_n)\upto\rho(f);\tag{P3}\\
	&\rho(\chi_I)<\infty;\tag{P4}\\
	&\textstyle\int_0^1f(x)\,\d x\lesssim \rho(f);\tag{P5}\\
	&\rho(f)=\rho(f^*).\tag{P6}
\end{align*}
	\makeatletter
	\tagsleft@false
	\makeatother
The \df{associate norm} of an r.i.\ norm $\rho$ is another such norm $\rho'$
defined as
$$
	\rho'(g):=\sup_{\rho(f)\le 1}\;\int_0^1 g(t)\,f(t)\,\d t,\quad f,g\in\mathcal{M}^+(I).
$$
It obeys the \df{Principle of Duality}; that is,
$$
	\rho'':=(\rho')'=\rho.
$$
Furthermore, the \df{H\"older inequality}
$$
	\int_0^1 f(t)\,g(t)\,\d t\le \rho(f)\,\rho'(g)
$$
holds for every $f,g\in\mathcal{M}^+(I)$.

Given the r.i.\ norm $\rho$, the corresponding \df{rearrangement-invariant Banach function space}
or, for short, \df{r.i.\ space}, is the collection
$$
	L_\rho(I):=\bigl\{f\in\mathcal{M}(I),\;\rho(|f|)<\infty\bigr\},
$$
endowed with the norm
$$
	\|f\|_{L_\rho(I)} := \rho(|f|),\quad f\in L_\rho(I).
$$
Next, given a bounded domain $\Omega$ in $\R^n$, we define the r.i.\ space
$$
	L_\rho(\Omega):=\bigl\{f\in\mathcal{M}(\Omega),\;\rho\bigl(f^*(t|\Omega|)\bigr)<\infty\bigr\}
$$
with
$$
	\|f\|_{L_\rho(\Omega)}:=\rho\bigl(f^*(t|\Omega|)\bigr),\quad f\in L_\rho(\Omega).
$$
If $\rho_1$ and $\rho_2$ are two r.i.\ norms, then
$L_{\rho_1}(\Omega)\subseteq L_{\rho_2}(\Omega)$
implies $L_{\rho_1}(\Omega)\embed L_{\rho_2}(\Omega)$.

Let $\varphi$ be a nonnegative function defined on the interval $[0,\infty)$.
If
\begin{enumerate}[(i)]
	\item $\varphi(t)=0$ iff $t=0$,
	\item $\varphi(t)$ is nondecreasing on $(0,\infty)$,
	\item $\varphi(t)/t$ is nonincreasing on $(0,\infty)$,
\end{enumerate}
then $\varphi$ is said to be \df{quasiconcave}.
We also say that a function $\varphi$ defined on bounded interval $[0,R]$, for
$R\in(0,\infty)$, is
quasiconcave if the continuation by constant value $\varphi(R)$ is quasiconcave on $[0,\infty)$.

The \df{fundamental function} of an r.i.\ norm $\rho$ on $\mathcal{M}^+(I)$ is defined
by
$$
	\varphi_\rho(t):=\rho\bigl(\chi_{(0,t)}\bigr),\quad t\in I,\quad \varphi_\rho(0)=0.
$$
The fundamental function is quasiconcave on $[0,1)$, continuous except perhaps at the origin and satisfies
$$
	\varphi_\rho(t)\,\varphi_{\rho'}(t)=t,\quad t\in I.
$$
Quasiconcave functions need not be concave, however, every r.i.\ space can be equivalently
renormed so that its fundamental function is concave.

Let $\varphi$ be a concave function. We define the \df{Lorentz endpoint space}
$\Lambda_\varphi(\Omega)$ by the function norm
$$
	\rho_{\Lambda_\varphi}(f):=\int_0^1f^*(t)\,\d\varphi(t),
		\quad f\in\mathcal{M}^+(I),
$$
where $\d\varphi$ stands for the Lebesgue-Stieltjes measure associated with $\varphi$.
We define the \df{Marcinkiewicz endpoint space} $M_\varphi(\Omega)$ by the function norm
$$
	\rho_{M_\varphi}(f):=\sup_{0<t<1}f^{**}(t)\,\varphi(t),
		\quad f\in\mathcal{M}^+(I).
$$
The endpoint spaces $\Lambda_\varphi(\Omega)$ and $M_\varphi(\Omega)$ are r.i.\ spaces with
the fundamental function $\varphi$. If $X(\Omega)$ is an r.i.\ space with the fundamental function
$\varphi$, then
$$
	\Lambda_\varphi(\Omega)\embed X\embed M_\varphi(\Omega).
$$
In other words, $\Lambda_\varphi(\Omega)$ and $M_\varphi(\Omega)$
are respectively the smallest and the largest r.i.\ spaces having the fundamental
function equivalent to $\varphi$.

The associate space of a Lorentz endpoint space $\Lambda_\varphi$
is the Marcinkiewicz endpoint space $M_{\psi}$ where
both $\varphi$ and $\psi$ are concave and $\varphi(t)\,\psi(t)=t$ on $I$.

If $|\Omega|<\infty$, then for every r.i.\ space $X(\Omega)$
$$
	L^\infty(\Omega) \embed X(\Omega) \embed L^1(\Omega).
$$

Assume either $1< p,q <\infty$ or $p=q=1$ or $p=q=\infty$.  The \df{Lorentz
space} $L^{p,q}(\Omega)$ is defined by the functional
$$
	\rho_{{p,q}}(f)=\rho_q \left(t^{{1\over p}-{1\over q}}f^*(t)\right),
		\quad f\in\mathcal{M}^+(I),
$$
where
$$
	\rho_q(f)=\begin{cases}
		\left(\displaystyle \int_0^1\, f(t)^q\,\d t\right)^{1\over q},
			& 1\le q < \infty,\\
		\displaystyle \esssup_{0< t< 1}\, f(t),
			& q=\infty,
	\end{cases}
$$
stands for the Banach function norm of the \df{Lebesgue space} $L^q(\Omega)$.
The functional $\rho_{{p,q}}$ is a Banach function norm if and only
if $1\le q\le p$. However, for $1<p<\infty$, $\rho_{{p,q}}$ can be
equivalently replaced by Banach function norm
$$
	\rho_{{(p,q)}}(f)=\rho_q \left(t^{{1\over p}-{1\over q}}f^{**}(t)\right).
$$
The fundamental function of the norm $\rho_{(p,q)}$ satisfies
$$
	\varphi_{\rho_{(p,q)}}(t)\simeq t^{1\over p},
		\quad t\in [0,1).
$$
The spaces $L^{p,1}(\Omega)$ and $L^{p,\infty}(\Omega)$ are equal to the Lorentz and Marcinkiewicz
endpoint spaces $\Lambda_\varphi(\Omega)$ and $M_\varphi(\Omega)$, respectively, with $\varphi(t)=t^{1/p}$.
If the first parameter is fixed, then the Lorentz spaces are nested, i.e., we
have $L^{p,q}(\Omega)\embed L^{p,r}(\Omega)$ whenever $1<p<\infty$ and $1\le q\le r\le \infty$.

\subsection{Orlicz Spaces}

We also need to know definitions and all the basic facts about Young
functions and Orlicz Spaces. All of these can be found for instance in the book by
L.\,Pick, A.\,Kufner, O.\,John and S.\,Fu\v c\'\i k \cite{FS}.

We shall say that $A$ is a \df{Young function} if there exists a function $a\colon[0,\infty)\to[0,\infty)$
such that
$$
	A(t)=\int_0^t a(s)\,\d s,
		\quad t\in [0,\infty),
$$
and $a$ has the following properties:
\begin{enumerate}[(i)]
	\item $a(s)>0$ for $s>0$, $a(0)=0$; 
	\item $a$ is right-continuous;
	\item $a$ is nondecreasing;
	\item $\lim_{s\to\infty} a(s)=\infty$.
\end{enumerate}
Every Young function is continuous, nonnegative, strictly increasing, convex on $[0,\infty)$
and satisfies
$$
	\lim_{t\to 0^+}{A(t)\over t}=
	\lim_{t\to\infty}{t\over A(t)}=0.
$$
Furthermore, one has
$$
	A(\alpha t) \le \alpha \,A(t),\quad \alpha\in [0,1],\quad  t\ge 0,
$$
and
$$
	A(\beta t) \ge \beta\, A(t),\quad \beta\in (1,\infty),\quad  t\ge 0.
$$
Moreover $A(t)/t$ is increasing on $(0,\infty)$ and we have the estimates
$$
	A(t) \le a(t)\,t \le A(2t),
		\quad t\in(0,\infty).
$$

For a Young function $A$ and a domain $\Omega\subseteq \R^n$, the \df{Orlicz
space} $L^A=L^A(\Omega)$ is the collection of all functions $f\in\mathcal{M}(\Omega)$
for which there exists a $\lambda>0$ such that
$$
	\int_{\Omega} A\left({|f(x)|\over \lambda}\right)\,\d x <\infty.
$$
The Orlicz space $L^A(\Omega)$ is endowed with the \df{Luxemburg norm}
$$
	\|f\|_{L^A}:=\inf\left\{\lambda>0,\;
		\int_{\Omega}A\left({|f(x)|\over \lambda}\right)\,\d x \le 1\right\}.
$$

The \df{complementary function} $\widetilde{A}$ of a Young function $A$ is given
by
$$
	\widetilde{A}(t):=\sup_{s>0}\bigl(st-A(s)\bigr),\quad t\in [0,\infty).
$$
The complementary function $\widetilde{A}$ is a Young function as well and
the complementary function of $\widetilde{A}$ is once more $A$.
For any Young function $A$ and its complementary function $\widetilde{A}$ there
is the relation
$$
	t\le A^{-1}(t)\,\widetilde{A}^{-1}(t)\le 2t,\quad t\in[0,\infty).
$$

With the help of the complementary function, we can define an alternative 
\df{Orlicz norm} on an Orlicz space by
$$
	\|f\|_{(L^A)}:=\sup\left\{\int_\Omega \bigl|f(x)\,g(x)\bigr|\,\d x\right\},
$$
where the supremum is taken over all functions $g\in\mathcal{M}(\Omega)$ such that
$$
	\int_{\Omega} \widetilde{A}\bigl(|g(x)|\bigr)\,\d x \le 1.
$$
The Luxemburg and Orlicz norms are equivalent, namely,
$$
	\|f\|_{L^A}\le \|f\|_{(L^A)}\le 2\,\|f\|_{L^A}.
$$

When $L^A(\Omega)$ is an Orlicz space
endowed with the Luxemburg norm, then the associate space is $L^{\widetilde{A}}(\Omega)$
with the Orlicz norm.
In particular, the sharp H\"older inequality for Orlicz spaces has the form
$$
	\int_{\Omega} \bigl|f(x)\,g(x)\bigr|\,\d x \le \|f\|_{L^A}\,\|f\|_{(L^{\widetilde{A}})}.
$$

The Orlicz space $L^A(\Omega)$ is an r.i.\ space and
$$
		\|\chi_E\|_{L^A} = {1\over A^{-1}\bigl({\textstyle {1\over |E|}}\bigr)}
$$
for every measurable $E\subseteq\Omega$ of positive measure,
thus, for a bounded domain $\Omega$, the fundamental function
for the Luxemburg norm is
$$
	\varphi_{L^A}(t) = {1\over A^{-1}\bigl({1\over t|\Omega|}\bigr)},
		\quad t\in I,\quad \varphi_{L^A}(0)=0.
$$

An Orlicz space $L^A(I)$ with fundamental function $\varphi$ coincides with
the Marcinkiewicz endpoint space $M_\varphi(I)$ if and only if there exists $\delta\in(0,1)$
such that
\begin{equation}
	\int_0^1 A\bigl(\delta\, A^{-1}({\textstyle {1\over t}})\bigr)\,\d t < \infty
	\label{eq:orlismarc}
\end{equation}
(see also~\cite{L}).

For $|\Omega|<\infty$, the inclusion relation between Orlicz spaces is governed
by inequalities involving the corresponding Young functions. If $A$ and $B$ are
Young functions then $L^A(\Omega)\embed L^B(\Omega)$ if and only if there exist
$c>0$ and $T\ge 0$ such that
$$
	B(t)\le A(ct),\quad t\ge T,
$$
which we denote by $B\prec A$ or $A\succ B$. If both $A\prec B$ and
$A\succ B$ hold, we say that $A$ and $B$ are equivalent and write $A\approx B$.
When $|\Omega|<\infty$, the inclusion $L^A(\Omega)\subseteq L^B(\Omega)$ is proper if and only
if
$$
	\limsup_{t\to\infty}{B(t)\over A(\lambda t)}=0
$$
for every $\lambda>0$.
In such case we write $B\eprec A$ or $A\esucc B$.

If $A \prec B$ or $A\eprec B$ then $\widetilde{A}\succ\widetilde{B}$ or
$\widetilde{A}\esucc\widetilde{B}$, respectively.

\section{Proofs of Theorems~B and C}
	\label{sec:BC}

	\begin{lemma}
		\label{lemm:function_E_properties_i}
		Let $A$ be a Young function and let $\xi$ be a nonzero real number.
		Assuming
		\begin{equation}
			\int_0 A(s)\,s^{{1\over\xi}-1}\,\d s < \infty,
			\label{eq:converges_near_zero}
		\end{equation}
		we define
		$$
			E_\xi(t)=|\xi|^{-1}t^{-{1\over\xi}}\int_0^t A(s)\,s^{{1\over\xi}-1}\,\d s,\quad t\in(0,\infty).
		$$
		Such $E_\xi$ is an increasing mapping of
		$(0,\infty)$ onto itself.  Moreover, if $R\in(0,\infty]$, then the following
		relations hold.
		\begin{align}
			\|t^\xi\chi_{(0,a)}(t)\|_{L^A(0,R)}
				&= {a^\xi\over E_\xi^{-1}\left({1\over a}\right)},
					\quad a\in(0,R),\; \xi>0,
					\label{eq:Function_Ei}
					\\
			\|t^\xi\chi_{(a,\infty)}(t)\|_{L^A(0,\infty)}
				&= {a^\xi\over E_\xi^{-1}\left({1\over a}\right)},
					\quad a\in(0,\infty),\; \xi<0.
					\label{eq:Function_Eii}
		\end{align}
		If, in addition, $\varepsilon\in(0,R)$ and if $\xi<0$ then
		\begin{equation}
			\|t^\xi\chi_{(a,R)}(t)\|_{L^A(0,R)}\simeq
				\|t^\xi\chi_{(a,\infty)}(t)\|_{L^A(0,\infty)},
					\quad a\in(0,R-\varepsilon).
				\label{eq:Function_Eiii}
		\end{equation}
		\end{lemma}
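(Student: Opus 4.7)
The plan is to address the four assertions in order: first the bijectivity of $E_\xi$, then the norm formulas (\ref{eq:Function_Ei}) and (\ref{eq:Function_Eii}) through a common calculation, and finally the truncation equivalence (\ref{eq:Function_Eiii}). For bijectivity, I would substitute $u = s/t$ in the definition of $E_\xi$ to obtain $E_\xi(t) = |\xi|^{-1}\int_0^1 A(tu)\,u^{1/\xi - 1}\,\d u$. Strict monotonicity in $t$ is then immediate from the strict monotonicity of $A$. For $E_\xi(t)\to 0$ as $t\to 0^+$, dominated convergence with the integrable dominant $A(u)\,u^{1/\xi-1}$ (finite by the hypothesis (\ref{eq:converges_near_zero})) applies. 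For $E_\xi(t)\to\infty$ as $t\to\infty$, one can invoke Fatou on the substituted form when $\xi>0$; when $\xi<0$ it is simpler to return to the original formula and note that the prefactor $t^{-1/\xi}$ already diverges while the integral is bounded below by a positive constant for $t\ge 1$. Together with continuity, this gives the claimed bijection of $(0,\infty)$ onto itself.

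For (\ref{eq:Function_Ei}) and (\ref{eq:Function_Eii}), the approach is to write out the Luxemburg modular $\int_{I}A(t^\xi/\lambda)\,\d t$ (with $I=(0,a)$ for (\ref{eq:Function_Ei}) and $I=(a,\infty)$ for (\ref{eq:Function_Eii})) and perform two substitutions: first $u = t^\xi$, then $s = u/\lambda$. A direct comparison with the definition of $E_\xi$ then shows that both modulars collapse to the common value $a\,E_\xi(a^\xi/\lambda)$. Imposing $a\,E_\xi(a^\xi/\lambda)\le 1$ and inverting the strictly increasing $E_\xi$ yields $\lambda\ge a^\xi/E_\xi^{-1}(1/a)$, which is the claimed norm.

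For (\ref{eq:Function_Eiii}), the inequality $\|t^\xi\chi_{(a,R)}\|_{L^A(0,R)}\le \|t^\xi\chi_{(a,\infty)}\|_{L^A(0,\infty)}$ is immediate, since $t^\xi\chi_{(a,R)}$ is supported in $(0,R)$. For the opposite direction, the triangle inequality gives
$$
    \|t^\xi\chi_{(a,\infty)}\|_{L^A(0,\infty)}
        \le \|t^\xi\chi_{(a,R)}\|_{L^A(0,R)} + \|t^\xi\chi_{(R,\infty)}\|_{L^A(0,\infty)},
$$
and (\ref{eq:Function_Eii}) identifies the last term as the $a$-independent constant $R^\xi/E_\xi^{-1}(1/R)$. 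Since $t^\xi\ge R^\xi$ on $(a,R)$ (because $\xi<0$), one also has the uniform lower bound $\|t^\xi\chi_{(a,R)}\|_{L^A(0,R)}\ge R^\xi/A^{-1}(1/(R-a))\ge R^\xi/A^{-1}(1/\varepsilon)$ for all $a\in(0,R-\varepsilon)$, and so the tail is absorbed into the truncated norm up to a constant depending only on $R$, $\varepsilon$, $A$ and $\xi$. The main technical nuisance throughout is the sign bookkeeping in the substitution $u=t^\xi$ when $\xi<0$, where the limits of integration reverse; the $|\xi|^{-1}$ factor in the definition of $E_\xi$ is calibrated precisely so that (\ref{eq:Function_Ei}) and (\ref{eq:Function_Eii}) take identical shape.
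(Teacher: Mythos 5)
Your proposal is correct and follows essentially the same route as the paper: the substitution $s=tu$ for monotonicity, the Luxemburg-modular computation reducing both norms to the condition $a\,E_\xi(a^\xi/\lambda)\le 1$, and the triangle-inequality argument with the $a$-independent tail $R^\xi/E_\xi^{-1}(1/R)$ for \eqref{eq:Function_Eiii}. The only departures are cosmetic: you verify surjectivity of $E_\xi$ explicitly (which the paper leaves implicit) and replace the paper's monotonicity-and-positivity argument for the uniform lower bound on $\|t^\xi\chi_{(a,R)}\|_{L^A(0,R)}$ by the explicit estimate $R^\xi/A^{-1}(1/\varepsilon)$.
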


	\begin{proof} Assume \eqref{eq:converges_near_zero}. By change of variables	$s\mapsto ts$ we have
		$$
			E_{\xi}(t) =|\xi|^{-1} \int_0^1 A(ts)\,s^{{1\over \xi}-1}\,\d s,
				\quad t\in(0,\infty),
		$$
		hence $E_\xi$ is increasing.

		By definition of the Luxemburg norm, we have
		$$
		\|t^\xi \chi_{(0,a)}(t)\|_{L^A(0,R)}
			=\inf\biggl\{\lambda>0,\; \int_0^a A\left({t^\xi\over \lambda}\right)\,\d t\le 1 \biggr\}.
		$$
		Next, by change of variables we get for $\xi>0$
		\begin{align*}
			\|t^\xi \chi_{(0,a)}(t)\|_{L^A(0,R)}
				&= \inf\biggl\{\lambda>0,\; {\lambda^{1\over \xi}\over \xi}
					\int_0^{{a^\xi\over \lambda}} A(s)\,s^{{1\over \xi}-1}\,\d s\le 1 \biggr\} \\
				&= \inf\Bigl\{\lambda>0,\; a\,E_\xi \bigl({\textstyle {a^\xi\over \lambda}}\bigr) \le 1 \Bigr\} \\
				&= {a^\xi \over E^{-1}_\xi\left({1\over a}\right)}.
		\end{align*}
		This proves the part \eqref{eq:Function_Ei}. The proof of the relation \eqref{eq:Function_Eii} can
		be done in an analogous way and we omit it.
			
		It remains to prove the \eqref{eq:Function_Eiii}.
		Clearly,
		$$
			\|t^\xi\chi_{(a,\infty)}(t)\|_{L^A(0,\infty)}
				\ge \|t^\xi\chi_{(a,R)}(t)\|_{L^A(0,\infty)}
				= \|t^\xi\chi_{(a,R)}(t)\|_{L^A(0,R)}
		$$
		by the monotonicity of the norm.
		On the other hand, we have by the triangle inequality
		$$
			\|t^\xi\chi_{(a,\infty)}(t)\|_{L^A(0,\infty)}
				\le \|t^\xi\chi_{(a,R)}(t)\|_{L^A(0,R)}
				+ \|t^\xi\chi_{(R,\infty)}(t)\|_{L^A(0,\infty)}.
		$$
		Using \eqref{eq:Function_Eii}, the term $\|t^\xi\chi_{(R,\infty)}(t)\|_{L^A(0,\infty)}$ equals
		$R^\xi\!/E^{-1}_\xi({1/R})$ since $\xi<0$.
		Thanks to the assumptions, this quantity is finite, say $K$.
		The term $\|t^\xi\chi_{(a,R)}(t)\|_{L^A(0,R)}$ is a decreasing
		function of the variable $a$, positive on $(0,R)$ and vanishing at $R$.
		Hence for every $\varepsilon\in(0,R)$ there exists a constant $C$
		such that
		$$
			K\le C\, \|t^\xi\chi_{(a,R)}(t)\|_{L^A(0,R)},\quad a\in(0,R-\varepsilon).
		$$
		For those $a$ we conclude that
		$$
			\|t^\xi\chi_{(a,\infty)}(t)\|_{L^A(0,\infty)}
				\le (C+1)\, \|t^\xi\chi_{(a,R)}(t)\|_{L^A(0,R)}.
		$$
		\end{proof}

	\begin{lemma} \label{lemm:quasi}
		Let $0<\alpha<1$, $\beta>0$, $\alpha+1/\beta\ge 1$
		and let $\varphi$ be a quasiconcave function on $(0,1)$. We define
		$$
			\bar{\varphi}(t)=
				t^{\beta(1 - \alpha)}\sup_{t<s<1} \varphi(s)\,s^{\beta(\alpha - 1)},
					\quad t\in (0,1),\quad \bar\varphi(0)=0.
		$$
		Then $\bar{\varphi}(t)$ and $\bar{\varphi}(t^{1/\beta})\,t^{\alpha}$ are quasiconcave.
		\end{lemma}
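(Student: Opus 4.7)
The plan is to verify the three defining properties of a quasiconcave function for each of the two expressions, exploiting the hypothesis $\alpha+1/\beta\ge 1$ in a crucial way. First I would set $\gamma:=\beta(1-\alpha)$ and observe that $\gamma\in(0,1]$: positivity is immediate from $\alpha<1$ and $\beta>0$, while $\gamma\le 1$ is exactly the assumption $\alpha+1/\beta\ge 1$ rearranged. Writing
$$
\bar\varphi(t)=t^\gamma\, g(t),\qquad g(t):=\sup_{t<s<1}\varphi(s)\,s^{-\gamma},
$$
the function $g$ is manifestly nonincreasing on $(0,1)$, since the domain of the supremum shrinks as $t$ grows, and is strictly positive on $(0,1)$ because $\varphi(s)>0$ for $s>0$; it is also finite since a quasiconcave function on $(0,1)$ is bounded.

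For the quasiconcavity of $\bar\varphi$, property (i) is immediate and property (iii) follows from $\bar\varphi(t)/t=t^{\gamma-1}g(t)$, a product of two nonnegative nonincreasing factors (the first uses $\gamma\le 1$). The main obstacle I expect is property (ii), because $\bar\varphi$ is a priori a product of the nondecreasing factor $t^\gamma$ with the nonincreasing factor $g$. To handle this I would rewrite
$$
\bar\varphi(t)=\sup_{t<s<1}\varphi(s)\,(t/s)^\gamma
$$
and, for $t_1<t_2$, split the supremum defining $\bar\varphi(t_1)$ into the two ranges $(t_1,t_2]$ and $(t_2,1)$. On $(t_2,1)$ the estimate $(t_1/s)^\gamma<(t_2/s)^\gamma$ gives a direct bound by $\bar\varphi(t_2)$. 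On $(t_1,t_2]$ the bound $(t_1/s)^\gamma<1$ combined with $\varphi(s)\le\varphi(t_2)$ gives a bound by $\varphi(t_2)$; then considering $s$ slightly larger than $t_2$ in the sup defining $\bar\varphi(t_2)$ and using continuity of $\varphi$ on $(0,1)$ yields $\bar\varphi(t_2)\ge\varphi(t_2)$, completing the comparison.

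For the second function $h(t):=\bar\varphi(t^{1/\beta})\,t^\alpha$, the decisive simplification is that the exponents collapse:
$$
h(t)=t^{\gamma/\beta}\,g(t^{1/\beta})\,t^\alpha=t\cdot g(t^{1/\beta}),
$$
because $\gamma/\beta=1-\alpha$. Property (iii) is then immediate, since $h(t)/t=g(t^{1/\beta})$ is nonincreasing as the composition of the nonincreasing $g$ with the nondecreasing map $t\mapsto t^{1/\beta}$. For property (ii) I would instead use the factorisation $h(t)=t^\alpha\,\bar\varphi(t^{1/\beta})$, a product of two nonnegative nondecreasing functions by virtue of the first part of the lemma and the fact that $\alpha,\beta>0$. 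The conditions $h(0)=0$ and $h(t)>0$ for $t\in(0,1)$ are inherited from the analogous properties of $\bar\varphi$.
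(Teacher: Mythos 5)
Your proof is correct, and in most steps it coincides with the paper's: the positivity property, the nonincreasing character of $\bar\varphi(t)/t=t^{\gamma-1}g(t)$ via $\gamma=\beta(1-\alpha)\le 1$ (which is exactly where $\alpha+1/\beta\ge 1$ enters), and the treatment of the second function through the identity $\bar\varphi(t^{1/\beta})t^{\alpha}/t=g(t^{1/\beta})$ together with the product-of-nondecreasing-functions argument are the same as in the paper. The one place where you take a different route is the monotonicity of $\bar\varphi$ itself: the paper interchanges the two suprema to obtain the closed-form representation $\bar\varphi(t)=\sup_{0<r<1}\varphi(r)\,\min\bigl\{1,(t/r)^{\beta(1-\alpha)}\bigr\}$, from which monotonicity in $t$ is visible at a glance, whereas you compare $\bar\varphi(t_1)$ and $\bar\varphi(t_2)$ directly by splitting the supremum over $(t_1,t_2]$ and $(t_2,1)$ and using the auxiliary bound $\bar\varphi(t_2)\ge\varphi(t_2)$. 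Both arguments are elementary and complete; the paper's representation is slightly slicker and reusable (it exhibits $\bar\varphi$ as a supremum of quasiconcave functions of $t$), while your splitting is more hands-on. Two small remarks: your appeal to continuity of $\varphi$ is legitimate (quasiconcavity forces continuity on the open interval, as the paper also notes for fundamental functions), but it is not needed --- monotonicity alone gives $\varphi(s)(t_2/s)^{\gamma}\ge\varphi(t_2)(t_2/s)^{\gamma}\to\varphi(t_2)$ as $s\downarrow t_2$; and your boundedness claim for $g$ is justified since property (iii) of quasiconcavity indeed makes $\varphi$ bounded on $(0,1)$.
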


	\begin{proof}
		Since $\varphi$ is nondecreasing, we have for every $t\in(0,1)$
		\begin{align*}
				\bar{\varphi}(t)
					&= t^{\beta(1-\alpha)}\sup_{t<s<1 } s^{\beta(\alpha-1)} \sup_{0<r<s} \varphi(r)\\
					&= t^{\beta(1-\alpha)}\sup_{0<r<1} \varphi(r) \sup_{\max\{r,t\}<s<1} s^{\beta(\alpha-1)}\\
					&= t^{\beta(1-\alpha)}\sup_{0<r<1} \varphi(r) \,\min\left\{t^{\beta(\alpha-1)},r^{\beta(\alpha-1)}\right\}\\
					&= \sup_{0<r<1} \varphi(r)\, \min\left\{1,\left({\textstyle {t\over r}}\right)^{\beta(1-\alpha)}\right\},
		\end{align*}
		hence $\bar{\varphi}$ is nondecreasing. Next, by definition, we have
		$$
			{\bar{\varphi}(t)\over t}
				= t^{\beta(1-\alpha)-1}\sup_{t<s<1} \varphi(s)\,s^{\beta(\alpha-1)},
				\quad t\in(0,1).
		$$
		The function $t^{\beta(1-\alpha)-1}$ is nonincreasing
		since the exponent $\beta(\alpha-1)+1$ is nonnegative by the assumptions of the lemma.
		Hence ${\bar{\varphi}(t)/ t}$ is nonincreasing on $(0,1)$.

		The function $\bar{\varphi}(t^{1/\beta})\,t^{\alpha}$ is increasing
		as a composition of nondecreasing functions multiplied by increasing function~$t^\alpha$.
		Next the expression 
		$$
			{\bar{\varphi}(t^{1\over\beta})\,t^{\alpha}\over t}=\sup_{t<s<1} \varphi(s^{1\over\beta})\,s^{\alpha-1},
				\quad t\in(0,1),
		$$
		is nonincreasing. The rest is trivial.
		\end{proof}

	\begin{lemma} \label{lemm:YoungB}
		Let $u$ be a quasiconcave, right continuous at origin and strictly increasing function on $[0,1)$
		such that
		$$
			\lim_{t\to 0^+} {u(t)\over t} =\infty.
		$$
		Then there exists a Young function $B$ such that
		the fundamental function of the Orlicz space $L^{B}(0,1)$ is equivalent to
		$u$ on $[0,1)$.
		Moreover
		$$
			\widetilde{B}^{-1}(t) \simeq 
				t\, u\left({\textstyle {1\over t}}\right)
				\quad t\in (1,\infty),
		$$
		where $\widetilde{B}$ is the complementary Young function to $B$.
		\end{lemma}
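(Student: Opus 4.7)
The plan is to exploit the formula $\varphi_{L^B}(t)=1/B^{-1}(1/t)$ for the fundamental function of an Orlicz space on $(0,1)$, together with the relation $B^{-1}(s)\widetilde{B}^{-1}(s)\simeq s$ from the preliminaries. These will immediately show that the main assertion ($\varphi_{L^B}\simeq u$ on $[0,1)$) is equivalent to $B^{-1}(s)\simeq 1/u(1/s)$ on $(1,\infty)$, and in turn to $\widetilde{B}^{-1}(s)\simeq s\,u(1/s)$ on $(1,\infty)$, which is precisely the moreover part. So it will suffice to produce a single Young function $B$ whose inverse has the prescribed asymptotics on $(1,\infty)$.

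First I extend $u$ to $[0,\infty)$ by $u(t):=u(1^-)$ for $t\ge 1$ (this limit is finite by quasiconcavity) and consider the natural candidate $f(s):=1/u(1/s)$ for $s>0$, with $f(0):=0$. Using only the defining properties of quasiconcavity of $u$ and the hypothesis $u(t)/t\to\infty$ as $t\to 0^+$, a direct check shows that $f$ is nondecreasing (since $u$ is, and $s\mapsto 1/s$ is decreasing); that $f(s)/s = 1/(s\,u(1/s)) = t/u(t)$ with $t=1/s$ is nonincreasing in $s$ by quasiconcavity of $u$; that $f(s)\to\infty$ and $f(s)/s\to 0$ as $s\to\infty$ (from $u(t)/t\to\infty$); and that $f(s)/s\to\infty$ as $s\to 0^+$ (since the extended $u$ is eventually constant, so $t/u(t)\to\infty$ as $t\to\infty$). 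Thus $f$ is a quasiconcave function on $[0,\infty)$ with exactly the asymptotic behaviour one needs from the inverse of a Young function.

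The main obstacle is that $f$ need not be concave, whereas the inverse of a convex function must be. To correct this I take the least concave majorant $\hat f$ of $f$, which by a standard construction is concave, continuous, nondecreasing with $\hat f(0)=0$ and satisfies $f\le\hat f\le 2f$. The quotient $\hat f(s)/s$ is then automatically nonincreasing (concavity plus $\hat f(0)=0$), but it need not blow up at the origin because $\hat f$ may be linear near $0$. This I fix by a routine local surgery: on a sufficiently small interval $[0,s_0]$ I replace $\hat f$ by a strictly concave piece such as $c\sqrt{s}$ adjusted to match $\hat f$ at $s_0$ with non-increased slope there. The resulting function, still called $\hat f$, is a concave bijection of $[0,\infty)$ onto itself with $\hat f(s)/s\to\infty$ as $s\to 0^+$ and $\hat f(s)/s\to 0$ as $s\to\infty$.

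Setting $B:=\hat f^{-1}$ then produces a convex, continuous, strictly increasing function vanishing at $0$ and satisfying both $B(t)/t\to 0$ at $0$ and $B(t)/t\to\infty$ at infinity, i.e.\ a Young function in the sense of the paper. Because the near-origin modification is confined to $[0,s_0]\subset[0,1]$ while $\hat f$ coincides (up to equivalence) with $f$ on $(1,\infty)$, one has $B^{-1}(s)\simeq 1/u(1/s)$ for $s>1$, and therefore $\varphi_{L^B}(t)=1/B^{-1}(1/t)\simeq u(t)$ on $(0,1)$. The moreover part follows immediately from $\widetilde{B}^{-1}(s)\simeq s/B^{-1}(s)\simeq s\,u(1/s)$.
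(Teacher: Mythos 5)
Your argument is correct in substance, but it takes a genuinely different route from the paper. The paper works on the side of $B$ itself: after normalizing $u(1^-)=1$ it defines the density $b(s)=1/\bigl(s\,u^{-1}(1/s)\bigr)$ for $s>1$ (and $b(s)=s$ on $[0,1]$), sets $B(t)=\int_0^t b(s)\,\d s$, and reads off the fundamental function from the elementary estimate $B(t)\le b(t)\,t\le B(2t)$; this is completely explicit but uses the inverse $u^{-1}$, hence the strict monotonicity and the normalization of $u$. You instead work on the side of $B^{-1}$: you observe that $f(s)=1/u(1/s)$ is quasiconcave with the correct endpoint behaviour, take its least concave majorant $\hat f$ (with the standard bound $f\le\hat f\le 2f$), and set $B=\hat f^{-1}$ after a local modification near the origin; the formula $\varphi_{L^B}(t)=1/B^{-1}(1/t)$ and the relation $B^{-1}(s)\,\widetilde{B}^{-1}(s)\simeq s$ then yield both conclusions at once. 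Your route is a bit less explicit (it imports the concave-majorant fact), but it avoids $u^{-1}$ altogether and makes clear that only quasiconcavity, right continuity at the origin and the limit hypothesis are really used.

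Two small points in your surgery step should be corrected, though neither is fatal. First, the scenario you guard against cannot occur: since $\hat f\ge f$ and $f(s)/s\to\infty$ as $s\to 0^+$, the majorant automatically satisfies $\hat f(s)/s\to\infty$, so it is never linear through the origin. What actually forces the modification is the jump of $f$ at the origin: with your extension of $u$ one has $f(0^+)=1/u(1^-)>0$, hence $\hat f(0^+)>0$, and $\hat f^{-1}$ would vanish on $[0,\hat f(0^+)]$, violating the requirement $a(s)>0$ for $s>0$ in the paper's definition of a Young function. Second, the splice with $c\sqrt{s}$ preserves concavity only if the slope does not increase across $s_0$: the matched piece has slope $\hat f(s_0)/(2s_0)$ at $s_0$, while $\hat f'(s_0^+)\le\bigl(\hat f(s_0)-\hat f(0^+)\bigr)/s_0$, so the gluing is legitimate precisely when $s_0$ is chosen so small that $\hat f(s_0)\le 2\hat f(0^+)$, which is available by continuity of $\hat f$ on $(0,\infty)$ and is exactly the ``sufficiently small $s_0$'' you allow. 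With these adjustments (and noting that $f(s)\to\infty$ as $s\to\infty$ comes from $u(0^+)=0$, i.e.\ right continuity at the origin, rather than from $u(t)/t\to\infty$), your proof is complete.
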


	\begin{proof}
		We can assume without loss of generality that $u(1)=1$. Then by continuity of $u$ we have $u(0,1)=(0,1)$.
		Let us define
		$$
			{b}(s)=\begin{cases}
			{1\over s\, u^{-1}\left({{1\over s}}\right) }, &\quad s\in(1,\infty), \\
			s, &\quad s\in[0,1].
			\end{cases}
		$$
		and
		$$
			{B}(t)=\int_0^t{b}(s)\,\d s,
				\quad t\in[0,\infty).
		$$
		We claim that ${B}$ is a Young function. The properties (i) and (ii) from the
		definition of Young function are clear. Let us prove that $b$ is nondecreasing.
		The function $u(t) / t$ is nonincreasing and $u$ itself is increasing,
		hence $s/u^{-1}(s)$ is nonincreasing
		and therefore ${b}(s)={1\over s\, u^{-1}(1/s)}$ is nondecreasing on $(1,\infty)$ and
		also (trivially) on $[0,1]$.
		It remains to show that $\lim_{s\to\infty}{b}(s)=\infty$.
		Indeed,
		$$
			\lim_{s\to\infty} b(s)
				= \lim_{t\to 0^+} {t\over u^{-1}(t)}
				= \lim_{t\to 0^+} {u(t)\over t}
				= \infty.
		$$

		Now, since $B$ is a Young function, we have that
		$$
			{B}(t) \le {b}(t)\, t \le {B}(2t),
				\quad t\in[0,\infty).
		$$
		It follows by definition of ${b}$ that
		$$
			B(t) \le {1\over u^{-1}\left({1\over t}\right)} \le B(2t),
				\quad t\in (1,\infty).
		$$
		Applying the increasing function $B^{-1}$, we get
		$$
			t \le B^{-1}\left({1\over u^{-1}\left({1\over t}\right)}\right) \le 2t,
				\quad t\in (1,\infty),
		$$
		that is, taking reciprocal values and $t\mapsto 1/s$,
		$$
			{s\over 2}\le {1\over {B}^{-1}\left({\textstyle {1\over u^{-1}(s)}}\right)}
				\le s,\quad s\in(0,1).
		$$
		Finally, since $u$ is increasing on $(0,1)$ and $u(0,1)=(0,1)$,
		this implies
		$$
			{u(y)\over 2}\le {1\over {B}^{-1}\bigl({\textstyle {1\over y}}\bigr)}
				\le u(y),\quad y\in(0,1).
		$$
		Hence by the definition of the fundamental function for the Luxemburg norm we conclude that
		$$
			\varphi_{L^{{B}}}(t)\simeq u(t),
				\quad t\in (0,1).
		$$
		In addition
		$$
			\widetilde{B}^{-1}(t) \simeq {t\over {B}^{-1}(t)}
				\simeq t\, u\left({\textstyle {1\over t}}\right),
					\quad t\in(0,1).
		$$
		The proof is complete.
		\end{proof}

	The following proposition enables us to reduce an embedding to a Lorentz endpoint spaces
	only to testing on characteristic functions.
	The idea of this statement is based on \citep[Theorem 7]{C},
	where the Lorentz space $L^{p,1}(\Omega)$ occurs as a target space,
	nonetheless the proof also works for any Lorentz endpoint space.
	For the sake of completeness, we show also the proof here.

	\begin{proposition} \label{prop:Lambda_space_embedding}
		Let $Y(\domain)$ be a
		Banach function space and $\Lambda(\domain)$ be a Lorentz endpoint space over $(\domain)$.
		Suppose that $T$ is a sublinear operator mapping $\Lambda(\domain)$ to $Y(\domain)$
		and satisfying
		\begin{equation}
			\|T\chi_E\|_{Y(\domain)} \lesssim \|\chi_E\|_{\Lambda(\domain)}
				\label{eq:ineq_on_char}
		\end{equation}
		for every measurable set $E\subseteq (\domain)$. Then 
		$$
			\|Tf\|_{Y(\domain)} \lesssim \|f\|_{\Lambda(\domain)}
		$$
		for every $f\in\Lambda(\domain)$.
		\end{proposition}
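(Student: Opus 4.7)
The plan is to reduce the estimate to the characteristic-function hypothesis via a layer-cake decomposition of $f$, and to recognize the resulting bound as $\|f\|_{\Lambda(0,1)}$ through a Fubini-type identity.

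Since all norms involved depend only on $|f|$, I may assume $f \ge 0$. I would approximate $f$ from below by dyadic nested-level-set simple functions
$$f_N = \frac{1}{2^N}\sum_{k=1}^{4^N} \chi_{E_{N,k}}, \qquad E_{N,k} = \{x : f(x) > k/2^N\},$$
so that $f_N \uparrow f$ almost everywhere and $E_{N,1} \supseteq E_{N,2} \supseteq \cdots$. Applying sublinearity of $T$ to this finite sum, followed by the triangle inequality in $Y(0,1)$ and the hypothesis \eqref{eq:ineq_on_char}, yields
$$\|T f_N\|_{Y(0,1)} \le \frac{1}{2^N} \sum_{k=1}^{4^N} \|T\chi_{E_{N,k}}\|_{Y(0,1)} \lesssim \frac{1}{2^N} \sum_{k=1}^{4^N} \varphi\bigl(\mu_f(k/2^N)\bigr),$$
where $\varphi$ is the fundamental function of $\Lambda(0,1)$. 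Since $\lambda \mapsto \varphi(\mu_f(\lambda))$ is nonincreasing, the right-hand side is a lower Riemann sum and is therefore bounded above by $\int_0^\infty \varphi(\mu_f(\lambda))\,d\lambda$.

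I would then identify this integral with $\|f\|_{\Lambda(0,1)}$ via Fubini applied to the product measure $d\lambda \otimes d\varphi$. Using the equivalence $\mu_f(\lambda) > t \iff f^*(t) > \lambda$ valid outside a null set,
$$\int_0^\infty \varphi(\mu_f(\lambda))\,d\lambda = \int_0^\infty \int_0^{\mu_f(\lambda)} d\varphi(t)\,d\lambda = \int_0^1 f^*(t)\,d\varphi(t) = \|f\|_{\Lambda(0,1)}.$$
This gives the simple-function bound $\|T f_N\|_{Y(0,1)} \lesssim \|f\|_{\Lambda(0,1)}$ uniformly in $N$.

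The main obstacle is then lifting the bound from the approximants $f_N$ to $f$ itself. For this I would invoke sublinearity once more in the form $|Tf| \le |Tf_N| + |T(f - f_N)|$ and apply the same dyadic argument to the nonnegative tails $f - f_N$, obtaining $\|T(f - f_N)\|_{Y(0,1)} \lesssim \|f - f_N\|_{\Lambda(0,1)} \to 0$ by absolute continuity of the Lorentz endpoint norm. The triangle inequality in $Y(0,1)$ then yields $\|Tf\|_{Y(0,1)} \lesssim \|f\|_{\Lambda(0,1)}$. The delicacy here is that the simple-function estimate must depend on the approximant only through its $\Lambda$-norm, which is precisely what the nested-level-set construction ensures, so that the bound applies verbatim to each tail $f - f_N$ and the limit passage can be closed.
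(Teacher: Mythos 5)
Your treatment of the approximants is sound and is essentially the paper's argument in another guise: the nested level-set decomposition together with the identity $\int_0^\infty \varphi\bigl(\mu_f(\lambda)\bigr)\,d\lambda=\int_0^1 f^*\,d\varphi$ amounts to the fact, used in the paper, that the $\Lambda$-norm is additive on sums of characteristic functions of nested sets; your Riemann-sum bound $\|Tf_N\|_{Y(0,1)}\lesssim\|f\|_{\Lambda(0,1)}$ is therefore correct. (Two minor blemishes: the reduction to $f\ge 0$ should go through $f=f^+-f^-$ and $|Tf|\le|Tf^+|+|Tf^-|$, as in the paper, not through ``the norms depend only on $|f|$'', since $Tf$ and $T|f|$ need not be comparable for a sublinear $T$; and ``absolute continuity of the Lorentz endpoint norm'' is not literally available when $\varphi(0^+)>0$, although $\|f-f_N\|_{\Lambda(0,1)}\to 0$ does still hold for your particular $f_N$ because then $\Lambda\subseteq L^\infty$.)

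The genuine gap is the limit passage. The dyadic argument proves $\|Tg_M\|_{Y(0,1)}\lesssim\|g\|_{\Lambda(0,1)}$ for the \emph{simple approximants} $g_M$ of a nonnegative $g$; it does not prove $\|Tg\|_{Y(0,1)}\lesssim\|g\|_{\Lambda(0,1)}$ for $g$ itself --- that is precisely the proposition. The tails $f-f_N$ are in general not simple functions, so when you assert $\|T(f-f_N)\|_{Y(0,1)}\lesssim\|f-f_N\|_{\Lambda(0,1)}$ ``by the same dyadic argument'' you are invoking the conclusion of the proposition for $f-f_N$, and the final step becomes circular. Note how the paper avoids this: the proven estimate is only ever applied to simple functions, using that $f_n-f_m$ is again simple, which gives $\|Tf_n-Tf_m\|_{Y(0,1)}\le\|T(f_n-f_m)\|_{Y(0,1)}\lesssim\|f_n-f_m\|_{\Lambda(0,1)}$, hence $(Tf_n)$ is Cauchy in $Y(0,1)$, and the limit is then identified with $Tf$. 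Your scheme needs an analogous device --- e.g.\ a Cauchy argument on the $Tf_N$ exploiting that $f_N-f_M$ is simple, or some continuity/Fatou-type property allowing $\|Tf\|_{Y(0,1)}\le\liminf_N\|Tf_N\|_{Y(0,1)}$ --- to transfer the uniform bound on $\|Tf_N\|_{Y(0,1)}$ to $\|Tf\|_{Y(0,1)}$; as written, the last step assumes what is to be proved.
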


	\begin{proof} Let $f$ be a simple nonnegative function on $(\domain)$. 
		Thus $f$ can be written as a finite sum $f=\sum_j \lambda_j\chi_{E_j}$,
		where $\lambda_j$ are positive real numbers and the sets $E_j$ are measurable
		subsets of $(\domain)$ satisfying $E_1\subseteq E_2\subseteq\cdots$.
		Then, as readily seen, we have $f^*=\sum_j \lambda_j\chi^*_{E_j}$.
		Let $\varphi$ be a fundamental function of $\Lambda(\domain)$. By the definition of the Lorentz norm we have
		$$
			\|f\|_{\Lambda(\domain)}
				= \int_0^1f^*\,\d\varphi
				= \int_0^1 \sum_j \lambda_j\chi^*_{E_j}\,\d\varphi
				= \sum_j \lambda_j\int_0^1 \chi^*_{E_j}\,\d\varphi
				= \sum_j \lambda_j \|\chi_{E_j}\|_{\Lambda(\domain)}.
		$$
		On account of the sublinearity of $T$ we have
		$|Tf| \le \sum_j \lambda_j|T\chi_{E_j}|$,
		and consequently by~\eqref{eq:ineq_on_char} and by axioms (P1) and (P2)
		we obtain
		$$
			\|Tf\|_{Y(\domain)}
				\le \sum_j\lambda_j\|T\chi_{E_j}\|_{Y(\domain)}
				\lesssim \sum_j\lambda_j\|\chi_{E_j}\|_{\Lambda(\domain)}
				= \|f\|_{\Lambda(\domain)}.
		$$
		Now if $f$ is simple but no longer nonnegative, we use
		the same for the positive part of $f$ and for the negative part of~$f$.

		Suppose that $f$ is an arbitrary function in $\Lambda(\domain)$ and let $f_n$ be a sequence
		of simple integrable functions converging to $f$ in $\Lambda(\domain)$.
		Then
		$$
			\|T(f_n) - T(f_m)\|_{Y(\domain)}
				\le \|T(f_n - f_m)\|_{Y(\domain)}
				\lesssim \|f_n - f_m\|_{\Lambda(\domain)},
		$$
		and $Tf_n$ is Cauchy, hence convergent in $Y(\domain)$. Since limits are unique in $Y(\domain)$,
		it follows that $\lim Tf_n=Tf$ and
		$$
			\|Tf\|_{Y(\domain)}
				= \lim \|Tf_n\|_{Y(\domain)}
				\lesssim \lim \|f_n\|_{\Lambda(\domain)}
				= \|f\|_{\Lambda(\domain)}
		$$
		as we wished to show.
		\end{proof}

	Next proposition provides the optimal r.i.\ range space for
	the operator $H_\alpha^\beta$ and a given r.i.\ domain space.
	The proof can be obtained by simple modification of the proof
	of \citep[Theorem 4.5]{EKP}, where $\beta=1$ and $\alpha=1/n$
	and therefore is omitted.

	\begin{proposition} \label{prop:opt_ri_range}
		Let $X(0,1)$ be an r.i.\ space, 
		$0<\alpha<1$, $\beta>0$ and $\alpha+1/\beta\ge 1$.
		Then
		$$
			Y'(0,1) := \Bigl\{ f\in\mathcal{M}(0,1),\;
				\|f\|_{Y'(0,1)} := \bigl\| \bigl(H_\alpha^\beta\bigr)' f^* \bigr\|_{X'(0,1)} < \infty
			\Bigr\}
		$$
		is an r.i.\ space, such that the associate space $Y(0,1)$ is the smallest space
		among r.i.\ spaces rendering $H_\alpha^\beta\colon X(0,1) \to Y(0,1)$ true.
		\end{proposition}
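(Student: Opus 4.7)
My plan is to follow the standard scheme for constructing an optimal r.i.\ range space via the associate space, as in \cite{EKP}, adapted to the operator $H_\alpha^\beta$. The starting point is to identify the formal adjoint $(H_\alpha^\beta)'$. Fubini gives
$$
	\bigl(H_\alpha^\beta)' g\bigr)(s) = s^{\alpha-1}\int_0^{s^{1/\beta}}g(t)\,\d t, \qquad s\in(0,1),
$$
and for a nonnegative nonincreasing function $f^*$ this simplifies to $(H_\alpha^\beta)' f^*(s) = s^{\alpha+1/\beta-1}\,f^{**}(s^{1/\beta})$. Note that the hypothesis $\alpha+1/\beta\ge 1$ makes the weight $s^{\alpha+1/\beta-1}$ bounded on $(0,1)$, which is what one needs for the elementary properties below.

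Next I would verify that $\|f\|_{Y'}:=\|(H_\alpha^\beta)'f^*\|_{X'}$ is a r.i.\ Banach function norm. Rearrangement invariance and the lattice property (P2) are immediate from the analogous properties of $f\mapsto f^*$ and of $X'$. The triangle inequality (P1) is the first nontrivial point and uses the subadditivity of the Hardy average, $(f+g)^{**}\le f^{**}+g^{**}$, combined with the triangle inequality on $X'$; this is the same device that makes the functional $\rho_{(p,q)}$ a norm in the paper's preliminaries. The Fatou property (P3) follows from monotone convergence applied first to $f^{**}$ and then to $X'$. For (P4), $(H_\alpha^\beta)'\chi_{(0,1)}(s)=\beta^{-1}s^{\alpha+1/\beta-1}\in L^\infty(0,1)\embed X'(0,1)$. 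Finally for (P5) one observes that on $s\in(1/2,1)$ one has $(H_\alpha^\beta)'f^*(s)\gtrsim f^{**}(1)=\int_0^1 f^*$, so $\|f\|_{Y'}\ge c\|\chi_{(1/2,1)}\|_{X'}\,\|f\|_{L^1}$. Having this, I set $Y:=(Y')'$, which is then a r.i.\ space by the Principle of Duality.

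The boundedness $H_\alpha^\beta\colon X(0,1)\to Y(0,1)$ will be obtained by duality from $(H_\alpha^\beta)'\colon Y'(0,1)\to X'(0,1)$. For the latter, note that for every $g\ge 0$ the Hardy--Littlewood inequality yields the pointwise bound
$$
	(H_\alpha^\beta)' g(s) = s^{\alpha-1}\int_0^{s^{1/\beta}}g(t)\,\d t
		\le s^{\alpha-1}\int_0^{s^{1/\beta}}g^*(t)\,\d t = (H_\alpha^\beta)'g^*(s),
$$
so by the lattice property of $X'$ and the definition of $Y'$,
$$
	\|(H_\alpha^\beta)'g\|_{X'}\le \|(H_\alpha^\beta)'g^*\|_{X'}=\|g\|_{Y'}.
$$

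For the minimality, suppose that $Z(0,1)$ is any r.i.\ space with $H_\alpha^\beta\colon X\to Z$ bounded. Passing to associates gives $(H_\alpha^\beta)'\colon Z'\to X'$ bounded, so for any $f\in Z'$ one has $f^*\in Z'$ with the same norm, and
$$
	\|f\|_{Y'}=\|(H_\alpha^\beta)'f^*\|_{X'}\lesssim \|f^*\|_{Z'}=\|f\|_{Z'},
$$
i.e.\ $Z'\embed Y'$. Taking associates once more and invoking the principle of duality reverses the inclusion to give $Y\embed Z$, as required. The main obstacle in carrying this out in full is the verification of the Banach function norm axioms in step two, since the dependence of $\|\cdot\|_{Y'}$ on $f^*$ rather than $f$ obstructs the naive triangle inequality and must be routed through $f^{**}$; every other step is a direct transcription of the $\beta=1$, $\alpha=1/n$ argument in \cite{EKP}.
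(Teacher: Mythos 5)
Your proposal is correct and follows essentially the same route as the paper, which omits the proof precisely because it is the "simple modification" of \citep[Theorem 4.5]{EKP} that you carry out: compute the adjoint $s^{\alpha-1}\int_0^{s^{1/\beta}}g(t)\,\d t$ by Fubini, verify the r.i.\ norm axioms through $(f+g)^{**}\le f^{**}+g^{**}$, and obtain both the boundedness $H_\alpha^\beta\colon X\to Y$ and the minimality of $Y$ by passing to associate spaces and the principle of duality. The only blemish is the spurious factor $\beta^{-1}$ in your evaluation of $\bigl(H_\alpha^\beta\bigr)'\chi_{(0,1)}(s)$, which should simply be $s^{\alpha+1/\beta-1}$; this is harmless for (P4).
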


	The construction of the optimal r.i.\ domain for $H_\alpha^\beta$ and
	a given r.i.\ range space is similar to that in \citep[Theorem 3.3]{KP},
	as well as its proof, needing only trivial modifications.
	The fact that $\mu_f=\mu_h$ is denoted by $f\sim h$.

	\begin{proposition} \label{prop:opt_ri_domain}
		Let $Y(0,1)$ be an r.i.\ space
		such that $Y(0,1)\embed L^{{1\over \beta(1-\alpha)},1}(0,1)$.
		Then
		$$
			X(0,1) := \Bigl\{
				f\in\mathcal{M}(0,1),\; \|f\|_{X(0,1)} := \sup_{h\sim f}\, \bigl\|H_\alpha^\beta\, h \bigr\|_{Y(0,1)} < \infty
				\Bigr\}
		$$
		is the largest r.i.\ space satisfying $H_\alpha^\beta\colon X(0,1) \to Y(0,1)$.
		\end{proposition}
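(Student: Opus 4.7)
The strategy is to adopt the standard supremum-over-rearrangements construction, in the spirit of \citep[Theorem 3.3]{KP}: define $\rho(f) := \sup_{h \sim f}\|H_\alpha^\beta h\|_{Y(0,1)}$ on $\mathcal{M}^+(0,1)$, show that $\rho$ is an r.i.\ Banach function norm, verify that $H_\alpha^\beta\colon L_\rho \to Y$ holds by construction, and then prove optimality by a direct rearrangement-invariance argument. Rearrangement invariance is built into the definition via the supremum, so the real content lies in checking the axioms (P1)--(P5).

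First I would verify (P1)--(P3) and (P6) for $\rho$. The norm properties (P1) and the lattice property (P2) follow from the corresponding properties of $\|\cdot\|_{Y}$ combined with the positivity of $H_\alpha^\beta$ and a standard coupling of equimeasurable rearrangements (e.g., for (P2), every $h \sim f$ with $f \le g$ can be dominated by some $h' \sim g$ via the layer-cake decomposition). The Fatou property (P3) requires the same kind of monotone coupling: given $0 \le f_n \upto f$ and $h \sim f$, one produces $h_n \sim f_n$ with $h_n \upto h$ almost everywhere by applying a common measure-preserving transformation, and then (P3) for $\|\cdot\|_Y$ transfers. Axiom (P6) is automatic.

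Axiom (P4) is essentially free: every function equimeasurable with $\chi_{(0,1)}$ on $(0,1)$ equals $1$ a.e., so $\rho(\chi_{(0,1)}) = \|H_\alpha^\beta 1\|_{Y}$, and since $H_\alpha^\beta 1(t) = (1-t^{\alpha\beta})/\alpha$ is bounded, this is finite by the embedding $L^\infty(0,1)\embed Y(0,1)$. Axiom (P5) is precisely where the hypothesis $Y(0,1) \embed L^{1/(\beta(1-\alpha)),1}(0,1)$ enters: the target inclusion gives $\|H_\alpha^\beta h\|_{L^{1/(\beta(1-\alpha)),1}} \lesssim \|H_\alpha^\beta h\|_Y$, and then the choice $h = f^*$ together with the explicit action of $H_\alpha^\beta$ on decreasing functions yields $\|f\|_{L^1} \lesssim \|H_\alpha^\beta f^*\|_{L^{1/(\beta(1-\alpha)),1}} \le \rho(f)$. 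Thus $\rho$ is a bona fide r.i.\ Banach function norm and $X(0,1) := L_\rho(0,1)$ is an r.i.\ space.

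The boundedness $H_\alpha^\beta\colon X \to Y$ with constant $1$ is immediate from the definition by taking $h = f$. For optimality, suppose $Z(0,1)$ is any r.i.\ space with $H_\alpha^\beta\colon Z \to Y$ of norm at most $C$. For $f \in Z$ and $h \sim f$, rearrangement invariance of $Z$ gives $\|h\|_{Z} = \|f\|_{Z}$, hence $\|H_\alpha^\beta h\|_{Y} \le C\|f\|_{Z}$; taking the supremum over $h \sim f$ yields $\|f\|_{X} \le C\|f\|_{Z}$, so $Z \embed X$. The main obstacle is (P5), where the hypothesis on $Y$ must be translated into a quantitative lower bound on $\rho$ in terms of the $L^1$ norm; once one commits to the reduction $h = f^*$ and uses the explicit formula for $H_\alpha^\beta f^*$ near the origin, the calculation is routine and matches the pattern in \citep[Theorem 3.3]{KP}.
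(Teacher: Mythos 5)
Your overall route is the one the paper intends: the paper gives no proof of this proposition, referring instead to the construction of Kerman and Pick (\citep[Theorem 3.3]{KP}) ``with trivial modifications'', and your sketch reproduces exactly that construction. Your treatment of (P4), of (P5) (where you correctly isolate the role of the hypothesis $Y(0,1)\embed L^{1/(\beta(1-\alpha)),1}(0,1)$; indeed a Fubini computation gives $\|H_\alpha^\beta f^*\|_{L^{1/(\beta(1-\alpha)),1}}\simeq\|f\|_{L^1}$), of the boundedness $H_\alpha^\beta\colon X\to Y$, and of optimality are all correct.

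There is, however, one genuine gap: the triangle inequality in (P1), which is the only nontrivial axiom, is not covered by the ``standard coupling of equimeasurable rearrangements'' you invoke. Your coupling does work for (P2) and (P3) (via Ryff's theorem, compose $g^*$, resp.\ $f_n^*$, with a measure-preserving map representing $h$), but for subadditivity the analogous coupling would have to produce, for every $h\sim f+g$, functions $h_1\sim f$ and $h_2\sim g$ with $h\le h_1+h_2$, and this is false in general. Take $f=\chi_{(0,1/2)}$ and $g(x)=9+2x$ on $(0,1)$, so that $h:=(f+g)^*$ is $h(t)=11-t$ and $g^*(t)=11-2t$. If $h\le \chi_H+h_2$ with $|H|=1/2$ and $h_2\sim g$, then $w:=h-\chi_H$ satisfies $w^*\le 11-2t$, while $\int_0^1 w^*=10=\int_0^1(11-2t)\,\d t$; hence $w^*=g^*$ a.e., i.e.\ $w\sim g$, which forces $|H^c\cap(0,u)|=u/2$ for every $u\in(0,1)$ --- impossible by the Lebesgue density theorem. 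So no pointwise domination argument can prove (P1). The standard repair (and, in essence, the cited proof) is dual: by Fubini and the sharp Hardy--Littlewood relation on the resonant space $(0,1)$, namely $\sup_{h\sim f}\int_0^1 h\,\psi=\int_0^1 f^*\,\psi^*$, one has $\|f\|_X=\sup\bigl\{\int_0^1 f^*(s)\,\psi_g^*(s)\,\d s\bigr\}$, the supremum extended over $g\ge0$ with $\|g\|_{Y'}\le1$, where $\psi_g(s)=s^{\alpha-1}\int_0^{s^{1/\beta}}g(t)\,\d t$. Each functional $f\mapsto\int_0^1 f^*\psi_g^*$ is subadditive by the submajorization $\int_0^t(f_1+f_2)^*\le\int_0^t f_1^*+\int_0^t f_2^*$ together with Hardy's lemma, and a supremum of subadditive, monotone, Fatou functionals inherits these properties; this yields (P1)--(P3) at one stroke and renders your couplings unnecessary.
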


	\begin{remark} \label{rem:wlog}
		Now, under the same assumptions on $\alpha$ and $\beta$ as in Proposition~\ref{prop:opt_ri_range}
		one can readily calculate the optimal endpoint estimates
		\begin{equation}
			H_\alpha^\beta\colon L^1(0,1) \to L^{{1\over\beta(1-\alpha)},1}(0,1)
				\label{eq:opt_emb_a}
		\end{equation}
		and
		\begin{equation}
			H_\alpha^\beta\colon L^{{1\over\alpha},1}(0,1) \to L^{\infty}(0,1).
				\label{eq:opt_emb_b}
		\end{equation}
		The relation \eqref{eq:opt_emb_a} shows that the assumption in Proposition~\ref{prop:opt_ri_domain}
		cause no loss of generality.

		Let us also discuss the assumption on fundamental function
		$$
			\sup_{0<t<1} \varphi(t^{1\over\beta})\, t^{{\alpha}-1}=\infty
		$$
		in Theorems~A and B.
		If this condition is not satisfied, then
		$$
			\varphi(t) \le C t^{\beta(1-\alpha)},
				\quad t\in(0,1),
		$$
		for some $C>0$, which is equivalent to
		$$
			L^{{1\over \beta(1-\alpha)},\infty}(0,1) \subseteq M_\varphi(0,1),
		$$
		hence, thanks to \eqref{eq:opt_emb_a}, also to
		$$
			H_\alpha^\beta\colon L^1(0,1) \to M_\varphi(0,1).
		$$
		Since $L^1(0,1)$ is the largest r.i.\ space, we can see that this considered
		assumption cause no relevant restriction to target spaces.
	\end{remark}

	\begin{proof}[Proof of Theorem~C]
		We first prove the inequality ``$\gtrsim$''.
		Let $\alpha$ and $\beta$ be as in the theorem and let us set
		$$
			\psi(t)=t \sup_{t < s < 1}\varphi(s^{1\over\beta})\,s^{\alpha -1},
				\quad t\in(0,1).
		$$
		Then, by Lemma~\ref{lemm:quasi}, $\psi(t)$ is quasiconcave function
		on $(0,1)$ and $\psi(t)\ge t^{\alpha}\,\varphi(t^{1/\beta})$ for $t\in(0,1)$.
		We claim that $\psi$ is up to equivalence the smallest function with this property.
		Indeed,
		let $\eta(t)$ be a quasiconcave function on $[0,1)$
		and $\eta(t)\ge t^{\alpha}\,\varphi(t^{1/\beta})$ for $t\in(0,1)$. Then
		$$\displaylines{
			\varphi(s^{1\over\beta})\,s^{\alpha -1} \le {\eta(s)\over s},
				\quad s\in(0,1),\cr
			\sup_{t<s<1}\varphi(s^{1\over\beta})\,s^{\alpha -1} \le \sup_{t < s < 1} {\eta(s)\over s},
				\quad t\in(0,1).
		}$$
		The right hand side of the last inequality equals $\eta(t)/t$ by quasiconcavity of $\eta$.
		Then multiplying by $t$	gives that $\psi(t)\le \eta(t)$ for $t\in(0,1)$.

		Now by Proposition~\ref{prop:opt_ri_domain} we have
		\begin{align*}
			\varphi_{X}(t)
				&= \sup_{h\sim \chi_{(0,t)}}\;
					\biggl\| \int_{s^\beta}^1 y^{\alpha-1}\,h(y)\,\d y \biggr\|_{M(0,1)}
				\ge	\biggl\| \int_{s^\beta}^1 y^{\alpha-1}\,\chi_{(0,t)}(y)\,\d y \biggr\|_{M(0,1)}
					\\
				& = \biggl\| \chi_{(0,t^{1/\beta})}(s) \int_{s^\beta}^t y^{\alpha-1}\,\d y \biggr\|_{M(0,1)}
				\simeq	\Bigl\| \chi_{(0,t^{1/\beta})}(s) \bigl( t^\alpha - s^{\alpha\beta} \bigr) \Bigr\|_{M(0,1)}
					\\
				& \ge	\Bigl\| \chi_{(0,t^{1/\beta}/2)}(s) \bigl( t^\alpha - t^{\alpha}2^{-\alpha\beta} \bigr) \Bigr\|_{M(0,1)}
				\simeq t^\alpha\, \bigl\| \chi_{(0,t^{1/\beta}/2)}(s) \bigr\|_{M(0,1)}
					\\
				& \simeq t^\alpha\, \bigl\| \chi_{(0,t^{1/\beta})}(s) \bigr\|_{M(0,1)}
				= t^\alpha\,\varphi(t^{1\over\beta}).
		\end{align*}
		Hence $\varphi_X(t)\gtrsim t^\alpha\,\varphi(t^{1/\beta})$ and by the claim
		$\psi(t)\lesssim \varphi_{X}(t)$.

		Let us focus on the inequality ``$\lesssim$''. Let $t\in(0,1/2)$ then
		\begin{align*}
			\varphi_X(t)
				& = \sup_{h\sim \chi_{(0,t)}}
					\biggl\| \int_{s^\beta}^1 y^{\alpha-1}\,h(y)\,\d y \biggr\|_{M(0,1)}
					\\
				& = \sup_{h\sim \chi_{(0,t)}} \sup_{0<s<1}
					\biggl( \int_{r^\beta}^1 y^{\alpha-1}\,h(y)\,\d y \biggr)^{\!\!**}\!(s)\,\varphi(s)
					\\
				& = \sup_{h\sim \chi_{(0,t)}} \sup_{0<s<1}
					{\varphi(s)\over s} \int_0^s \int_{r^\beta}^1 y^{\alpha-1}\,h(y)\,\d y\,\d r
					\\
				& = \sup_{h\sim \chi_{(0,t)}} \sup_{0<s<1}
					{\varphi(s)\over s} \int_0^1 y^{\alpha-1}\,h(y) \int_0^{\min\{{y^{1\over\beta}},s\}}\,\d r\,\d y
					\\
				& = \sup_{0<s<1} \sup_{h\sim \chi_{(0,t)}} 
					{\varphi(s)\over s} \biggl( \int_0^{s^\beta} y^{\alpha+{1\over\beta}-1}\,h(y)\,\d y
					+ s \int_{s^\beta}^1 y^{\alpha-1}\,h(y)\,\d y \biggr)
					\\
				& = \sup_{0<s<1} \sup_{\scriptstyle 0<z<s^\beta \atop \scriptstyle s^\beta < z+t <1} 
					{\varphi(s)\over s} \biggl( \int_z^{s^\beta} y^{\alpha+{1\over\beta}-1}\,\d y
					+ s \int_{s^\beta}^{z+t} y^{\alpha-1}\,\d y \biggr).
		\end{align*}
		Denote
		$$
			V(s,z,t)=
				{\varphi(s)\over s} \biggl( \int_z^{s^\beta} y^{\alpha+{1\over\beta}-1}\,\d y
				+ s \int_{s^\beta}^{z+t} y^{\alpha-1}\,\d y \biggr).
		$$
		We split the area over which the supremum is taken into three disjoint regions, namely
		$$
				\varphi_X(t)
					\le	\sup_{\scriptstyle 0<s<t^{1\over\beta } \atop\scriptstyle 0<z<s^\beta} V(s,z,t)
					+ \sup_{\scriptstyle t^{1\over\beta} < s < (1-t)^{1\over\beta} \atop \scriptstyle s^\beta -t<z<s^\beta} V(s,z,t)
					+ \sup_{\scriptstyle (1-t)^{1\over\beta} <s<1 \atop\scriptstyle s^\beta -t<z<1-t} V(s,z,t).
		$$
		Now
		\begin{align*}
			\sup_{\scriptstyle 0<s<t^{1\over\beta } \atop\scriptstyle 0<z<s^\beta} V(s,z,t)
				&\le \sup_{0<s<t^{1\over\beta}} {\varphi(s)\over s} \biggl( \int_0^{s^\beta} y^{\alpha+{1\over\beta}-1}\,\d y
					+ s \int_{s^\beta}^{s^\beta+t} y^{\alpha-1}\,\d y \biggr)
					\\
				&\le \sup_{0<s<t^{1\over\beta}} {\varphi(s)\over s} \biggl( \int_0^{s^\beta} s^{\beta(\alpha-1)+1}\,\d y
					+ s \int_0^{t} y^{\alpha-1}\,\d y \biggr)
					\\
				&\lesssim \sup_{0<s<t^{1\over\alpha}} {\varphi(s)\over s}
					\left( ss^{\alpha\beta} + s t^{\alpha} \right)
					\\
				&\lesssim \varphi(t^{1\over\beta})\,t^{\alpha}
					\\
				&\le t\sup_{t<s<1}\varphi(s^{1\over\beta})\,s^{\alpha-1},
		\end{align*}
		\begin{align*}
			\sup_{\scriptstyle t^{1\over\beta} < s < (1-t)^{1\over\beta} \atop \scriptstyle s^\beta -t<z<s^\beta} V(s,z,t)
				&\le \sup_{t^{1\over\beta}<s<(1-t)^{1\over\beta}} {\varphi(s)\over s} \biggl( \int_{s^\beta-t}^{s^\beta} y^{\alpha+{1\over\beta}-1}\,\d y
					+ s \int_{s^\beta}^{s^\beta+t} y^{\alpha-1}\,\d y \biggr)
					\\
				&\le \sup_{t^{1\over\beta}<s<(1-t)^{1\over\beta}} {\varphi(s)\over s}
					\left( ts^{\beta(\alpha-1)+1} + s t s^{\beta(\alpha-1)} \right)
					\\
				&\lesssim t \sup_{t^{1\over\beta}<s<1}\varphi(s)\,s^{\beta(\alpha-1)}
					\\
				&\lesssim t \sup_{t<s<1}\varphi(s^{1\over\beta})\,s^{\alpha-1}
		\end{align*}
		and
		\begin{align*}
			\sup_{\scriptstyle (1-t)^{1\over\beta} <s<1 \atop\scriptstyle s^\beta -t<z<1-t} V(s,z,t)
				&\le \sup_{(1-t)^{1\over\beta}<s<1} {\varphi(s)\over s} \biggl( \int_{s^\beta-t}^{s^\beta} y^{\alpha+{1\over\beta}-1}\,\d y
					+ s \int_{s^\beta}^{1} y^{\alpha-1}\,\d y \biggr)
					\\
				&\le \sup_{(1-t)^{1\over\beta}<s<1} {\varphi(s)\over s}
					\left( ts^{\beta(\alpha-1)+1} + s(1-s^\beta)s^{\beta(\alpha-1)} \right)
					\\
				&\le \sup_{(1-t)^{1\over\beta}<s<1} {\varphi(s)\over s}
					\left( ts^{\beta(\alpha-1)+1} + ts^{\beta(\alpha-1)+1} \right)
					\\
				&\lesssim t\sup_{(1-t)^{1\over\beta}<s<1}\varphi(s)\,s^{\beta(\alpha-1)}
					\\
				&\lesssim t\sup_{t<s<1}\varphi(s^{1\over\beta})\,s^{\alpha-1}.
		\end{align*}
		Finally
		$$
			\varphi_X(t)\lesssim
				t\sup_{t<s<1}\varphi(s)\,s^{\alpha-1},\quad
				t\in(0,1/2).
		$$
		\end{proof}

	\begin{proof}[Proof of Theorem~B]
		Consider the Orlicz space $L^A(\domain)$ and the Marcinkiewicz space $M(\domain)$
		from the assumption of the theorem. We will prove both implications at once
		using only equivalent steps.
		The statement
		$H_\alpha^\beta\colon L^A(\domain) \to M(\domain)$
		means
		$$
			\biggl\| \int_{t^\beta}^1 g(s)\,s^{{\alpha}-1}\,\d s \biggr\|_{M(\domain)}
				\lesssim \|g\|_{L^A(\domain)},
					\quad g\in\mathcal{M}^+(\domain).
		$$
		Passing to the associate spaces, this is by \citep[Lemma 8.1]{CPS} the same as
		$$
			\biggl\| t^{{\alpha}-1}\int_0^{t^{1\over\beta}} f(s)\,\d s \biggr\|_{L^{\widetilde{A}}(\domain)}
				\lesssim \|f\|_{M'(\domain)},
					\quad f\in \mathcal{M}^+(\domain).
		$$
		where $\widetilde{A}$ is the complementary Young function to $A$.
		This is equivalent to
		$$
			\biggl\| t^{{\alpha}-1}\int_0^{t^{1\over\beta}} f^*(s)\,\d s \biggr\|_{L^{\widetilde{A}}(\domain)}
				\lesssim \|f\|_{M'(\domain)},
					\quad f\in M'(\domain).
		$$
		Indeed, one implication follows just by passing to only nonincreasing functions with
		the fact that $\|f\|_{M'(\domain)}=\|f^*\|_{M'(\domain)}$, and the other holds thanks to the Hardy-Littlewood
		inequality applied to functions $f$ and $\chi_{(0,t^{1/\beta})}$.

		Using the fact that $M'(\domain)$ is a Lorentz endpoint space and passing to the
		characteristic functions while keeping
		Proposition~\ref{prop:Lambda_space_embedding} in mind, this is equivalent to
		\begin{equation}
			\biggl\| t^{{\alpha}-1} \int_0^{t^{1\over\beta}} \chi_{(0,a)}(s)\,\d s \biggr\|_{L^{\widetilde{A}}(\domain)}
				\lesssim \varphi_{M'}(a),\quad a\in(0,1).
				\label{eq:char_in_op_ineq}
		\end{equation}

		Let us compute the left hand side. Clearly
		\begin{align*}
			\biggl\| t^{{\alpha}-1} \int_0^{t^{1\over\beta}} \chi_{(0,a)}(s)\,\d s \biggr\|_{L^{\widetilde{A}}(\domain)}
				&= \| t^{{\alpha}-1} \chi_{(0,a^\beta)}(t)\cdot t^{1\over\beta} 
					+ t^{{\alpha}-1} \chi_{(a^\beta,1)}(t)\cdot a \|_{L^{\widetilde{A}}(\domain)}
					\\
				&\le \| t^{{\alpha}+{1\over\beta}-1} \chi_{(0,a^\beta)}(t) \|_{L^{\widetilde{A}}(\domain)}
					+ a\, \|t^{{\alpha}-1} \chi_{(a^\beta,1)}(t) \|_{L^{\widetilde{A}}(\domain)}.
		\end{align*}
		We suppose that $a\in(0,2^{-{1/\beta}})$, since we are interested only in
		values of $a$ near zero.
		We show that the second summand dominates the first one.
		Indeed, for any r.i.\ norm we have
		$$\displaylines{\quad
			a \|t^{{\alpha}-1} \chi_{(a^\beta,1)}(t) \| 
				\ge a \|t^{{\alpha}-1} \chi_{(a^\beta,2a^\beta)}(t) \| 
				\ge a(2a^\beta)^{{\alpha}-1}\| \chi_{(a^\beta,2a^\beta)}(t) \| 
				\hfill\cr\hfill
				\simeq a^{\beta(\alpha-1)+1} \| \chi_{(0,a^\beta)}(t) \| 
				\ge \| t^{\alpha+{1\over\beta}-1} \chi_{(0,a^\beta)}(t) \| 
		\quad}$$
		Therefore we can state that
		\begin{equation}
			\biggl\| t^{{\alpha}-1} \int_0^{t^{1\over\beta}} \chi_{(0,a)}(s)\,\d s \biggr\|_{L^{\widetilde{A}}(\domain)}
				\simeq  a\, \|t^{{\alpha}-1} \chi_{(a^\beta,1)}(t) \|_{L^{\widetilde{A}}(\domain)}.
					\label{eq:chareq}
		\end{equation}
		At this moment, it is the time for using Lemma~\ref{lemm:function_E_properties_i}.
		We need the part \eqref{eq:Function_Eiii}
		with \eqref{eq:Function_Eii} for $\xi={\alpha-1}<0$, $R=1$ and
		$\varepsilon=1-2^{-{1/\beta}}$. The assumption \eqref{eq:converges_near_zero} can be rendered as satisfied
		without any loss of generality since the domain is of finite
		measure, hence the appropriate Young function can be redefined on $(0,1)$
		without any effect to the corresponding Orlicz space. Note also that we are using the complementary Young
		function $\widetilde{A}$ instead of $A$.
		Hence we conclude that \eqref{eq:char_in_op_ineq} is equivalent to
		\begin{equation}
			{a^{\beta(\alpha-1)+1}\over E_{{\alpha}-1}^{-1}(a^{-\beta})}
				 \lesssim \varphi_{M'}(a),\quad a\in(0,2^{-{1\over\beta}}).
				 \label{eq:phiMvsE}
		\end{equation}

		Now we substitute $t=a^{-\beta}$ and use the fact that $\varphi_{M'}(a)\,\varphi(a)=a$.
		We get
		\begin{equation}
			\varphi(t^{-{1\over\beta}})\,t^{1-{\alpha}}
				\lesssim E_{{\alpha}-1}^{-1}(t),\quad t\in(2^{1\over\beta},\infty).
				\label{eq:phivsE}
		\end{equation}
		Let us 	define
		$$
			F(t)=\bar{\varphi}(t^{-{1\over\beta}})\,t^{1-{\alpha}},
				\quad t\in (0,1),
		$$
		where the function $\bar{\varphi}(t)$ is taken from Lemma~\ref{lemm:quasi}.
		We claim that $F(t)$ is the least nondecreasing majorant of $\varphi(t^{-{1/\beta}})\,t^{1-{\alpha}}$.
		Indeed,
		$$
			\bar{\varphi}(t)=
				t^{\beta(1 - \alpha)}\sup_{t<s<1} \varphi(s)\,s^{\beta(\alpha - 1)},
					\quad t\in (0,1),
		$$
		hence
		$$
				\bar\varphi(t^{-{1\over\beta}})\,t^{1-{\alpha}}
					= \sup_{0<s<t} \varphi(s^{-{1\over\beta}})\,s^{1-{\alpha}},
					\quad t\in(0,1),
		$$
		and the claim follows.

		Since the function $E_{{\alpha}-1}$ is strictly increasing as well as its
		inverse, we can enlarge the left hand side of the inequality \eqref{eq:phivsE}
		by $F(t)$.  Hence we can equivalently continue by
		\begin{equation}
				F(t) \lesssim E_{{\alpha}-1}^{-1}(t),\quad t\in(2^{1\over\beta},\infty).
					\label{eq:FvsE}
		\end{equation}

		Now Lemma~\ref{lemm:YoungB} comes to play with $u(t)=\bar\varphi(t^{1/\beta})\,t^\alpha$.
		By Lemma~\ref{lemm:quasi} $u$ is quasiconcave and strictly increasing on~$(0,1)$.
		Next,
		$$
			\lim_{t\to 0^+} {u(t)\over t}
				= \lim_{t\to 0^+} \sup_{t<s<1} \varphi(s^{1\over\beta})\,s^{\alpha-1}
				= \infty
		$$
		thanks to the assumption of the theorem. Also, $u$ is right continuous at the origin
		since
		$$
			u(t) = t \sup_{t<s<1} \varphi(s^{1\over\beta})\,s^{\alpha-1}
				\le \varphi (1)\, t^\alpha.
		$$

		We obtain a Young function $B$ such
		that $\widetilde{B}^{-1}(t)\simeq F(t)$.
		Theorem~C ensures that the space $L^B(0,1)$
		has the same fundamental function as the optimal r.i.\ domain $X(0,1)$
		in $H_\alpha^\beta\colon X(0,1)\to M(0,1)$.
		Using this and passing to inverse functions,
		\eqref{eq:FvsE} is equivalent to the existence of some constant $C>0$ such that
		$$
				E_{{\alpha}-1}(t) \le \widetilde{B}(Ct),\quad t\in(c,\infty),
		$$
		where $c=E^{-1}_{{\alpha}-1}(2^{1/\beta})>0$. This is however equivalent to
		$$
				E_{{\alpha}-1}(t) \lesssim \widetilde{B}(Ct),\quad t\in(2,\infty),
		$$
		which is nothing but
		$$
			\int_0^t {\widetilde{A}(s)\over s^{{1/(1-\alpha)}+1}}\,\d s
				\lesssim {\widetilde{B}(Ct)\over t^{1/(1-\alpha)}},
					\quad t\in(2,\infty).
		$$

		Finally observe that the quantities
		$\int_0^t \widetilde{A}(s)\, s^{{1/(\alpha-1)}-1}\,\d s$
		and
		$\int_1^t \widetilde{A}(s)\, s^{{1/(\alpha-1)}-1}\,\d s$
		are comparable since $t\in(2,\infty)$.
		One can now immediately observe that the resulting inequality does
		not depend on the behavior of the Young function $\widetilde{A}$
		on the interval $(0,1)$.
		\end{proof}

\section{Proof of Theorem~A} \label{sec:main}

	Before proving Theorem~A we need
	several auxiliary results. The next theorem
	is the crucial ingredient in the proof of the main result
	and it reveals the constructive approach to the nonexistence
	of an optimal Orlicz domain space in appropriate situations.
	
	\begin{theorem}\label{thm:construct}
		Let Young functions $A$ and $B$ satisfy for $0<\alpha<1$
		\let\widetilde\relax
		and some $C>0$ the inequality
		\begin{equation}
			\int_1^t {\widetilde{A}(s)\over s^{{1/(1-\alpha)}+1}}\,\d s
				\lesssim {\widetilde{B}(Ct)\over t^{1/(1-\alpha)}},
					\quad t\in(2,\infty).
				\label{eq:red}
		\end{equation}
		If
		\begin{equation}
			\lim_{t\to\infty} {B(t)\over t^{1/(1-\alpha)}}=\infty
				\label{eq:unbound_G}
		\end{equation}
		and
		\begin{equation}
			\limsup_{t\to\infty} {t^{1/(1-\alpha)}\over B(Kt)} \int_1^t {B(s)\over s^{1/(1-\alpha)-1}}\,\d s=\infty
				\label{eq:limsup_cond}
		\end{equation}
		for every $K\ge 1$, then there exists a Young function $A_1$ satisfying
		$A_1 \esucc A$
		and
		$$
			\int_1^t {\widetilde{A}_1(s)\over s^{{1/(1-\alpha)}+1}}\,\d s
				\lesssim {\widetilde{B}(Ct)\over t^{1/(1-\alpha)}},
					\quad t\in(2,\infty).
		$$
		\end{theorem}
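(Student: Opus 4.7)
My plan is to construct $A_1$ in the form $A_1 = A + g$, where $g$ is an auxiliary Young function designed so that $g \esucc A$ (hence automatically $A_1 \esucc A$) while at the same time $\int_1^t g(s)/s^{p+1}\,\d s \lesssim B(Ct)/t^p$, writing $p = 1/(1-\alpha)$. Since the sum of two Young functions is again a Young function, this reduces the problem to constructing $g$. The heuristic is that the failure of \eqref{eq:limsup_cond} for $B$ provides exactly the slack needed for $g$ to be genuinely larger than $A$ while still respecting the same integral bound satisfied by $A$ in~\eqref{eq:red}.

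I would construct $g$ by specifying its right-continuous, nondecreasing derivative $g'$. From \eqref{eq:limsup_cond} I extract a rapidly increasing sequence $t_n \upto \infty$ and amplitudes $K_n \upto \infty$ with
$$
\int_1^{t_n} \frac{B(s)}{s^{p+1}}\,\d s \ge K_n\,\frac{B(K_n t_n)}{t_n^p},
$$
and then concentrate this mass on a subinterval $[a_n, b_n] \subset [1,t_n]$ with $b_n/a_n$ large and the family $\{[a_n,b_n]\}$ disjoint and escaping to infinity. Set $g' = c_n B'$ on each $[a_n, b_n]$ for amplitudes $c_n$ still to be chosen, and let $g'$ be the constant $c_n B'(b_n)$ on the gap $[b_n, a_{n+1}]$, requiring $c_{n+1} B'(a_{n+1}) \ge c_n B'(b_n)$ so that $g'$ stays nondecreasing across the junctions. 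The resulting $g$ is convex, so $A_1 = A+g$ is a Young function.

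The integral bound on $g$ reduces, for $t \ge b_N$, to an estimate of the form $\sum_{n\le N} c_n\bigl(B(b_n)-B(a_n)\bigr)\,a_n^{-p} \lesssim B(Ct)/t^p$, which the quantitative slack in \eqref{eq:limsup_cond} supplies once $c_n$ grows slowly relative to $b_n/a_n$. For the property $g \esucc A$ I observe that \eqref{eq:red} yields a pointwise estimate $A(t) \lesssim B(C_0 t)$ by inserting a single dyadic block into the integral, so $g(b_n) \ge c_n(B(b_n)-B(a_n))$ can be forced larger than any $A(\lambda b_n)$ by choosing $c_n$ and $b_n/a_n$ to grow quickly enough, with \eqref{eq:unbound_G} ensuring that the growth of $B$ genuinely translates into growth relative to $A$. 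The main obstacle is maintaining $g \esucc A$ uniformly in $t$ rather than merely along the subsequence $b_n$: on the gaps $g$ is linear while $A$ continues super-linearly and might catch up. I would handle this by selecting each $a_{n+1}$ only slightly above $b_n$ so that $A$ has not yet overtaken $g$ before the next bump re-establishes the margin. The whole argument is a careful inductive selection of $(a_n, b_n, c_n)$ honoring convexity, the integral bound, and the strict $\esucc$-domination simultaneously; the failure of condition~(v) is precisely what creates room for this three-way balancing.
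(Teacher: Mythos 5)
Your reduction $A_1=A+g$ is legitimate (a sum of Young functions is a Young function, and $g\esucc A$ forces $A_1\esucc A$), and the diagonal extraction of a sequence from \eqref{eq:limsup_cond} is fine. The genuine gap is in how you propose to obtain $g\esucc A$: you claim that the bump heights $g(b_n)\ge c_n\bigl(B(b_n)-B(a_n)\bigr)$ ``can be forced larger than any $A(\lambda b_n)$ by choosing $c_n$ and $b_n/a_n$ to grow quickly enough.'' They cannot, because the integral constraint you also impose caps the heights. Writing $p=1/(1-\alpha)$ and testing your required bound at $t=2b_n$ gives
$g(b_n)\,b_n^{-p}\lesssim \int_{b_n}^{2b_n} g(s)\,s^{-p-1}\,\d s\le \int_1^{2b_n} g(s)\,s^{-p-1}\,\d s\lesssim B(2Cb_n)\,b_n^{-p}$,
so necessarily $g(b_n)\lesssim B(2Cb_n)$ no matter how $c_n$, $a_n$, $b_n$ are tuned (already at $t=b_n$ the constraint forces $c_n B(b_n/2)\lesssim B(Cb_n)$, so $c_n$ cannot grow at all if $B\in\Delta_2$). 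Consequently $g\esucc A$ can hold only if $\limsup_{t\to\infty} B(ct)/A(Kt)=\infty$ for every $K$, and proving this from \eqref{eq:red} combined with \eqref{eq:limsup_cond} is the heart of the theorem. This is exactly the key estimate in the paper's proof: there one defines $\tau_t$ as the first point where $A(\tau)/\tau=\gamma B(2Ct)/t$ and shows by contradiction that if $A(2Kt)\gtrsim B(2Ct)$ for all large $t$, then substituting into \eqref{eq:red} after a change of variables yields $\int_1^t B(Cs)\,s^{-p-1}\,\d s\lesssim B(CKt)\,t^{-p}$, contradicting \eqref{eq:limsup_cond}. Your proposal never makes any such deduction — you use \eqref{eq:red} only for the pointwise bound $A(t)\lesssim B(C_0t)$, which goes in the wrong direction — so the ``three-way balancing'' you defer to at the end cannot be carried out as described, and \eqref{eq:unbound_G} alone certainly does not ``translate growth of $B$ into growth relative to $A$.''

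Two further points. First, the accumulated sum $\sum_{n\le N}c_n\bigl(B(b_n)-B(a_n)\bigr)a_n^{-p}$ is not controlled by ``slack in \eqref{eq:limsup_cond}'': that condition bounds $\int_1^t B(s)s^{-p-1}\,\d s$ from \emph{below} against $B(Kt)t^{-p}$, which is the wrong direction for an upper bound by $B(Ct)t^{-p}$. In the paper this step is handled instead by \eqref{eq:unbound_G} together with choosing the bump locations so sparse that the previously accumulated constant is dominated by $B(Ct)t^{-p}$ itself; you would need the same device. Second, the ``main obstacle'' you identify — keeping $g$ ahead of $A$ on the gaps between bumps — is not an obstacle: $A_1\esucc A$ (properness of the inclusion of the corresponding Orlicz spaces) only requires $\limsup_{t\to\infty}A_1(t)/A(\lambda t)=\infty$ for every $\lambda$, i.e.\ domination along a sequence, which the paper checks at the points $2t_j$; no uniform-in-$t$ comparison is needed, and trying to enforce one would only fight the integral constraint further. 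For comparison, the paper does not add a separate bump function but replaces $A$ by its chord on intervals $(t_j,\tau_j)$; this automatically produces heights comparable to $B(2Ct_j)$ — the maximum the integral constraint allows — and the limsup estimate described above is what converts those heights into strict domination of $A$.
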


	\begin{proof}
		Let $A$ and $B$ be the Young functions from the assumptions.
		\let\widetilde\relax
		First,
		we establish an upper bound for $\widetilde{A}$. Namely, for $t\in(1,\infty)$
		\begin{equation*}
				{B(2Ct)\over t^{1/(1-\alpha)}}
				\gtrsim \int_1^{2t} {\widetilde{A}(s)\over s^{{1/(1-\alpha)}+1}}\,\d s
				\ge \int_t^{2t} {\widetilde{A}(s)\over s^{{1/(1-\alpha)}+1}}\,\d s
				\ge \widetilde{A}(t)\int_t^{2t} {1\over s^{{1/(1-\alpha)}+1}}\,\d s
				\simeq {\widetilde{A}(t)\over t^{1/(1-\alpha)}}.
		\end{equation*}
		Using this, we obtain the existence of $\gamma>0$ such that
		\begin{equation}
			\gamma\, \widetilde{B}(2Ct)>\widetilde{A}(t),\quad t\in (1,\infty).
				\label{eq:boundA}
		\end{equation}
		Now we fix this $\gamma$ and, for every $t\in(1,\infty)$, we define the set
		$$
			G_t=\bigl\{s\in(1,\infty),\;{\textstyle{ \widetilde{A}(s)\over s}}
				\ge \gamma\,{\textstyle{\widetilde{B}(2Ct)\over t}}\bigr\}.
		$$
		Since $\widetilde{A}(s)/s$ is a nondecreasing mapping from $(0,\infty)$ onto itself, the sets
		are nonempty for every $t\in(1,\infty)$.
		Let us define $\tau=\tau_t=\inf G_t$.
		Observe that, for $t\in(1,\infty)$ and $s\in(1,t)$,
		$$
			{\widetilde{A}(s)\over s} \le {\widetilde{A}(t)\over t} < \gamma\,{\widetilde{B}(2Ct)\over t}
		$$
		thanks to the estimate \eqref{eq:boundA}.
		Hence $\tau_t > t$ for every $t$. Moreover, since
		$\widetilde{A}(t)/t$ is continuous, we have the equality
		\begin{equation}
			{\widetilde{A}(\tau)\over \tau}=\gamma\, {\widetilde{B}(2Ct)\over t},\quad t\in (1,\infty).
				\label{eq:tauteq}
		\end{equation}

		Let $K$ be a real number such that $K\ge 1$. Then
		\begin{equation}
			\limsup_{t\to\infty}{\widetilde{A}(\tau_t)\over\tau_t}\cdot {t\over \widetilde{A}(2Kt)}=\infty.
				\label{eq:unbound_for_all_M}
		\end{equation}
		Indeed, suppose that there exist $K\ge 1$ and some $L>0$ such that there
		is for all $t\in(1,\infty)$ the estimate
		$$
			{\widetilde{A}(\tau)\over\tau}\cdot {t\over \widetilde{A}(2Kt)}<L,
		$$
		or equivalently
		\begin{equation}
			{\widetilde{A}(2Kt)\over t}>L^{-1}\,{\widetilde{A}(\tau)\over\tau}.
				\label{eq:tmpbound}
		\end{equation}
		Now for $t>2$ the following holds:
		\begin{align*}
			{B(CKt)\over t^{1/(1-\alpha)}}
				&\gtrsim \int_1^{Kt} {\widetilde{A}(s)\over s^{{1/(1-\alpha)}+1}}\,\d s 
				\ge \int_K^{Kt} {\widetilde{A}(s)\over s^{{1/(1-\alpha)}+1}}\,\d s
				 	\\
				&\simeq \int_{1/2}^{t/2} {\widetilde{A}(2Ks)\over s^{{1/(1-\alpha)}+1}}\,\d s 
					\tag{by change of variables} \\
				&\gtrsim \int_{1/2}^{t/2}{\widetilde{A}(\tau_s)\over \tau_s}{1\over s^{{1/(1-\alpha)}}}\,\d s
					\tag{by \eqref{eq:tmpbound}} \\
				&\simeq  \int_{1/2}^{t/2}{B(2Cs)\over s}{1\over s^{1/(1-\alpha)}}\,\d s
					\tag{by \eqref{eq:tauteq}}\\
				&\simeq 	\int_1^{t}{B(Cs)\over s^{1/(1-\alpha)+1}}\,\d s.
		\end{align*}
		After the change of variables $Cs\mapsto s$, this contradicts \eqref{eq:limsup_cond} for this $K$.

		From estimate \eqref{eq:unbound_for_all_M}, we can take an increasing sequence
		$t_j\in(2,\infty)$, $j\ge 2$, such that
		\begin{equation}
			\lim_{j\to\infty}{\widetilde{A}(\tau_j)\over\tau_j}\cdot{t_j\over \widetilde{A}(jt_j)}=\infty,
				\label{eq:subseq}
		\end{equation}
		where we define $\tau_j=\tau_{t_j}$. We can also choose this sequence to ensure $t_{j+1}>\tau_j$.
		We claim that without loss of
		generality we can assume that $2t_j<\tau_j$ for every index $j\ge 2$.
		Indeed, suppose that there exists a subsequence ${j_k}$ in $\N$ such that
		$\tau_{j_k}\le 2t_{j_k}$. Then $\widetilde{A}(\tau_{j_k})\le
		\widetilde{A}(2t_{j_k})$ and
		$$
			{\widetilde{A}(\tau_{j_k})\over\tau_{j_k}}\cdot {t_{j_k}\over \widetilde{A}(j_k t_{j_k})}
				\le {\widetilde{A}(2t_{j_k})\over t_{j_k}}\cdot {t_{j_k}\over \widetilde{A}({j_k\over 2} 2t_{j_k})}
				\le {\widetilde{A}(2t_{j_k})\over \widetilde{A}(2t_{j_k})}\cdot {2\over j_k}
				= {2\over j_k}\to 0
					\quad\text{as}\quad k\to\infty,
		$$
		which is impossible due to \eqref{eq:subseq}.

		At this moment, we can define a function $\widetilde{A}_1$ by the formula
		$$
			\widetilde{A}_1(t) =
			\begin{cases}
				\widetilde{A}(t_j) +
				{ \widetilde{A}(\tau_j) - \widetilde{A}(t_j) \over \tau_j - t_j}\,(t - t_j),
					& t\in(t_j,\tau_j),\;j\in\N,\\
				\widetilde{A}(t),
					& \text{otherwise}.
			\end{cases}
		$$
		Obviously, $\widetilde{A}_1\ge \widetilde{A}$ and $\widetilde{A}_1$ is a Young function.
		Moreover, for $j\in\N$, $j\ge 2$,
		\begin{align*}
			{\widetilde{A}_1(2t_j)\over \widetilde{A}(jt_j)}
				& ={\widetilde{A}(t_j) +
					{ \widetilde{A}(\tau_j) - \widetilde{A}(t_j) \over \tau_j - t_j}
						\, t_j \over \widetilde{A}(jt_j)}
					\\
				& \ge { \widetilde{A}(\tau_j) - \widetilde{A}(t_j) \over \widetilde{A}(jt_j)}
					\cdot {t_j \over \tau_j}
					\\
				& \ge { \widetilde{A}(\tau_j) - \widetilde{A}({\textstyle{\tau_j\over 2}})
						\over \widetilde{A}(jt_j)}
					\cdot {t_j \over \tau_j}
					\tag{since $2t_j < \tau_j$}
					\\
				& \ge {1\over 2}\cdot {\widetilde{A}(\tau_j)\over \tau_j}
					\cdot {t_j\over \widetilde{A}(jt_j)}
					\tag{since $\widetilde{A}(\tau_j/2)\le \widetilde{A}(\tau_j)/2$},
		\end{align*}
		and the latter tends to infinity as $j\to\infty$ by \eqref{eq:subseq}.
		Therefore
		$$
			\limsup_{t\to\infty}{\widetilde{A}_1(t)\over \widetilde{A}(\lambda t)}=\infty
		$$
		for every $\lambda>2$, which is precisely $\widetilde{A}_1\esucc \widetilde{A}$.

		It remains to show that $\widetilde{A}_1$ satisfies the condition \eqref{eq:red}
		with $A$ replaced by $A_1$.
		Let $t\in(2,\infty)$ be fixed. We find $j\in\N$ such shat $t\in[t_j,t_{j+1})$.
		Then we have
		\begin{align*}
			\int_1^t {\widetilde{A}_1(s)\over s^{{1/(1-\alpha)}+1}}\,\d s
				&\le \int_1^t {\widetilde{A}(s)\over s^{{1/(1-\alpha)}+1}}\,\d s
					+ \sum_{k=1}^j \int_{t_k}^{\tau_k}\biggl(\widetilde{A}(t_k)
					+ {\widetilde{A}(\tau_k) - \widetilde{A}(t_k) \over \tau_k - t_k}\,(s-t_k)\biggl){1\over s^{{1/(1-\alpha)}+1}}\,\d s
					\\
				&\le 2\int_1^t {\widetilde{A}(s)\over s^{{1/(1-\alpha)}+1}}\,\d s
					+ \sum_{k=1}^j {\widetilde{A}(\tau_k) - \widetilde{A}(t_k) \over \tau_k - t_k}
						\int_{t_k}^{\tau_k} {(s-t_k)\over s^{{1/(1-\alpha)}+1}}\,\d s.
		\end{align*}
		We can follow with estimates of the latter integral. Since ${1/(1-\alpha)}>1$, we have
		for $k\in\N$ such that $1\le k\le j$,
		$$
			\int_{t_k}^{\tau_k}{(s-t_k)\over s^{{1/(1-\alpha)}+1}}\,\d s
				\le \int_{t_k}^{\tau_k} {1\over s^{{1/(1-\alpha)}}}\,\d s
				\le \int_{t_k}^{\infty} {1\over s^{{1/(1-\alpha)}}}\,\d s
				\simeq {1\over t_k^{{1/(1-\alpha)}-1}}.
		$$
		This together with the fact that $2t_k<\tau_k$ gives
		$$
			\int_1^t {\widetilde{A}_1(s)\over s^{{1/(1-\alpha)}+1}}\,\d s
				\lesssim 2\int_1^t {\widetilde{A}(s)\over s^{{1/(1-\alpha)}+1}}\,\d s
					+ 2\sum_{k=1}^j {\widetilde{A}(\tau_k)\over \tau_k} {1\over t_k^{{1/(1-\alpha)}-1}}.
		$$
		Since \eqref{eq:tauteq} implies
		$$
			{\widetilde{A}(\tau_k)\over \tau_k} {1\over t_k^{{1/(1-\alpha)}-1}}
				= \gamma\,{B(2Ct_k)\over {t_k}^{1/(1-\alpha)}}
		$$
		we have by \eqref{eq:red}
		$$
			\int_1^t {\widetilde{A}_1(s)\over s^{{1/(1-\alpha)}+1}}\,\d s
				\lesssim {B(Ct)\over t^{1/(1-\alpha)}}
					+ \sum_{k=1}^j {B(2Ct_k)\over {t_k}^{1/(1-\alpha)}}.
		$$
		Because the sequence $t_j$ could be taken arbitrarily fast growing,
		we can assume without loss of generality that
		$$
			{B(Ct)\over {t}^{1/(1-\alpha)}}
				\ge \sum_{k=1}^{i-1} {B(2Ct_k)\over {t_k}^{1/(1-\alpha)}},
					\quad t\in(t_i,\infty),
		$$
		thanks to the assumption \eqref{eq:unbound_G}.
		Adding all the estimates together, we finally obtain that
		$$
			\int_1^t {\widetilde{A}_1(s)\over s^{{1/(1-\alpha)}+1}}\,\d s
				\lesssim {B(Ct)\over t^{1/(1-\alpha)}}
		$$
		which proves the theorem.
		\end{proof}

	The following auxiliary fact is based on the idea of L'H\^ opital's rule and the
	proof is very similar to the proof of the original result, hence we omit it.

	\begin{proposition} \label{prop:Lhop}
		Suppose that $f$ and $g$ are real functions
		having finite derivatives on some neighborhood of infinity.
		If $g(x)\to\infty$ as $x\to\infty$, then
		$$
			\liminf_{x\to\infty} {f'(x)\over g'(x)} \le \liminf_{x\to\infty} {f(x)\over g(x)}.
		$$
		\end{proposition}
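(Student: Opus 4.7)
\textbf{Proof plan for Proposition~\ref{prop:Lhop}.}
The plan is to mimic the classical proof of L'H\^opital's rule via the Cauchy mean value theorem, and extract a one–sided inequality instead of an equality. Denote $L:=\liminf_{x\to\infty} f'(x)/g'(x)$. The case $L=-\infty$ is trivial, so assume $L>-\infty$ (the argument will cover $L=+\infty$ as well). Fix any real number $M<L$; by the definition of $\liminf$ there exists $x_0$ so large that
$$
\frac{f'(\xi)}{g'(\xi)}>M,\qquad \xi>x_0,
$$
on the set where the quotient is defined. Since $g(x)\to\infty$, we may also assume $g(x)>g(x_0)$ for all $x$ large enough, and that $g'$ does not vanish on $(x_0,\infty)$ (else restrict $f/g$ and $f'/g'$ to a neighborhood of infinity where this holds, exploiting $g\to\infty$).

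Next I would apply the Cauchy mean value theorem on every interval $[x_0,x]$ with $x>x_0$: there exists $\xi=\xi(x)\in(x_0,x)$ such that
$$
\frac{f(x)-f(x_0)}{g(x)-g(x_0)}=\frac{f'(\xi)}{g'(\xi)}>M.
$$
Since $g(x)-g(x_0)>0$, rearranging and dividing by $g(x)>0$ yields
$$
\frac{f(x)}{g(x)}>M\left(1-\frac{g(x_0)}{g(x)}\right)+\frac{f(x_0)}{g(x)}.
$$
Because $g(x_0)$ and $f(x_0)$ are fixed while $g(x)\to\infty$, the right-hand side converges to $M$ as $x\to\infty$. Consequently $\liminf_{x\to\infty}f(x)/g(x)\ge M$, and since $M<L$ was arbitrary, $\liminf_{x\to\infty} f(x)/g(x)\ge L$, which is the claim.

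The main obstacle is the technical point that $f'/g'$ need only be defined on some neighborhood of infinity, so one must be careful that the quotient makes sense in the Cauchy mean value theorem and that $g'$ has a definite sign. This is where the hypothesis $g(x)\to\infty$ (rather than merely $|g(x)|\to\infty$) plays its role: it forces $g$ to be eventually increasing in any interval one cares to examine, so that the rearrangement step above preserves the inequality. Apart from this bookkeeping, the proof is routine, which is why the paper omits it.
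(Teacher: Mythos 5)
Your argument is correct and is exactly the classical Cauchy mean value theorem proof of L'H\^opital's rule that the paper has in mind when it omits the proof as ``very similar to the proof of the original result'': apply the Cauchy MVT on $[x_0,x]$, use $g(x)\to\infty$ to make the boundary terms $f(x_0)/g(x)$ and $g(x_0)/g(x)$ vanish, and let $M\uparrow L$. One caveat: your parenthetical claim that $g(x)\to\infty$ allows you to restrict to a neighborhood of infinity where $g'$ does not vanish (and that it ``forces $g$ to be eventually increasing'') is not a valid deduction --- e.g.\ $g(x)=x+2\sin x$ tends to infinity while $g'$ has zeros arbitrarily far out; the nonvanishing of $g'$ near infinity should instead be read as implicit in the hypothesis (it is what makes $f'/g'$ meaningful, and without it even the stated inequality can fail), and it is harmless here since in the paper's only application $g'(t)=G(t)/t>0$.
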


	\begin{theorem} \label{thm:eqcondition}
		Let $G\colon(0,\infty)\to(0,\infty)$ be a continuous nondecreasing function satisfying $\Delta_2$ condition. Then the
		following are equivalent.
		\begin{enumerate}[(i)]
			\item $$\limsup_{t\to\infty} {1\over G(Kt)}\int_1^t {G(s)\over s}\,\d s = \infty\quad\text{for every $K\ge 1$};$$
			\item $$\limsup_{t\to\infty} {1\over G(t)}\int_1^t {G(s)\over s}\,\d s = \infty;$$
			\item $$\liminf_{t\to\infty} {G(Kt)\over G(t)}=1\quad\text{for every $K\ge 1$}.$$
		\end{enumerate}
		\end{theorem}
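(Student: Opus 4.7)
The plan is to establish the equivalences as two pairs: $(i) \Leftrightarrow (ii)$ and $(ii) \Leftrightarrow (iii)$. The first pair is a direct consequence of the $\Delta_2$ assumption: $(i) \Rightarrow (ii)$ follows by taking $K = 1$, and for $(ii) \Rightarrow (i)$, given any $K \geq 1$, the $\Delta_2$ condition furnishes a constant $C_K$ with $G(Kt) \leq C_K G(t)$ for $t$ past some threshold, so $\frac{1}{G(Kt)}\int_1^t \frac{G(s)}{s}\,\d s \geq \frac{1}{C_K G(t)}\int_1^t \frac{G(s)}{s}\,\d s$, and the unboundedness in $(ii)$ passes to $(i)$.

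For $(iii) \Rightarrow (ii)$, given any $M > 0$ I would pick $K \geq 1$ with $\log K > M$. Applying $(iii)$ yields a sequence $t_n \to \infty$ along which $G(Kt_n)/G(t_n) \to 1$, and monotonicity of $G$ gives
\[
\frac{1}{G(Kt_n)}\int_1^{Kt_n}\frac{G(s)}{s}\,\d s \geq \frac{1}{G(Kt_n)}\int_{t_n}^{Kt_n}\frac{G(s)}{s}\,\d s \geq \frac{G(t_n)}{G(Kt_n)}\log K \longrightarrow \log K > M.
\]
Since $M$ was arbitrary, the limsup in $(ii)$ equals $\infty$.

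The core of the proof is $(ii) \Rightarrow (iii)$, which I would tackle by contraposition. If $(iii)$ fails then there exist $K_0 > 1$, $\mu > 1$ and $T_0$ with $G(K_0 t) \geq \mu G(t)$ for every $t \geq T_0$. Writing $J(t) := \int_1^t \frac{G(s)}{s}\,\d s$ and $\phi(t) := J(t)/G(t)$, splitting $J(t)$ at $t/K_0$ together with monotonicity of $G$ gives $J(t) \leq J(t/K_0) + G(t)\log K_0$, and combining with $G(t/K_0) \leq G(t)/\mu$ produces the contractive recursion
\[
\phi(t) \leq \frac{1}{\mu}\,\phi(t/K_0) + \log K_0, \qquad t \geq K_0 T_0.
\]
Iterating $n$ times until $t/K_0^n$ lands in a fixed bounded range absorbs the initial term and yields $\phi(t) \leq \log K_0 \cdot \frac{\mu}{\mu - 1} + o(1)$ as $t \to \infty$, contradicting $(ii)$. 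The main obstacle is precisely this iteration: the contraction factor $1/\mu < 1$, supplied by the failure of $(iii)$, is essential, since without strict contraction the bound on $\phi$ would blow up geometrically and no contradiction would ensue.
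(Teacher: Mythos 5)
Your proposal is correct, and the implications $(i)\Leftrightarrow(ii)$ and $(iii)\Rightarrow(ii)$ are essentially the paper's arguments (the latter rests on the same estimate $\int_t^{Kt}G(s)\,\frac{\d s}{s}\ge G(t)\log K$, just phrased along a sequence rather than through a limsup manipulation). Where you genuinely diverge is in the key implication $(ii)\Rightarrow(iii)$. The paper proves it directly, using a L'H\^opital-type inequality for liminfs (Proposition~\ref{prop:Lhop}) applied to $f(t)=\int_1^t G(Ks)\,\frac{\d s}{s}$ and $g(t)=\int_1^t G(s)\,\frac{\d s}{s}$, and it invokes the $\Delta_2$ condition to bound $\frac{1}{G(t)}\int_t^{Kt}G(s)\,\frac{\d s}{s}$ by $c\log K$. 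You instead argue by contraposition: the failure of $(iii)$ (together with monotonicity of $G$, which forces $\liminf G(K t)/G(t)\ge 1$, a point you use implicitly) yields $G(K_0t)\ge\mu G(t)$ with $\mu>1$ for large $t$, and splitting $J(t)=\int_1^t G(s)\,\frac{\d s}{s}$ at $t/K_0$ gives the recursion $\phi(t)\le\mu^{-1}\phi(t/K_0)+\log K_0$ for $\phi=J/G$, whose iteration (with $\phi$ bounded on the compact landing interval, by continuity and positivity of $G$) bounds $\phi$ and negates $(ii)$. This buys you two things: no auxiliary L'H\^opital-type lemma is needed, and the $\Delta_2$ condition is not used at all in $(ii)\Leftrightarrow(iii)$ --- it enters only in $(ii)\Rightarrow(i)$ --- so your argument is more self-contained and marginally more general for that step, while the paper's route is shorter once its Proposition~\ref{prop:Lhop} is available.
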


	\begin{proof}
		The equivalence (ii)$\Leftrightarrow$(i) is trivial,
		since the quantities $G(t)$ and $G(Kt)$ are comparable for every fixed $K\ge 1$ 
		thanks to the fact that $G\in\Delta_2$.

		Let us focus on the implication	(iii)$\Rightarrow$(ii). Let $K\ge 1$ be fixed
		and suppose $t>1$. Then
		$$
			\int_1^{Kt} G(s){\d s\over s}
				\ge \int_t^{Kt} G(s){\d s\over s}
				\ge G(t) \int_t^{Kt} {\d s\over s}
				= G(t)\log K.
		$$
		Dividing both sides by $G(Kt)$ we obtain
		$$
			\log K{G(t)\over G(Kt)}
				\le {1\over G(Kt)} \int_1^{Kt} {G(s)\over s}\,\d s.
		$$
		Taking the limes superior as $t\to\infty$ on both sides of the inequality, we get
		$$
			\log K = \log K\, \limsup_{t\to\infty} {G(t)\over G(Kt)}
				\le \limsup_{t\to\infty} {1\over G(Kt)}\int_1^{Kt} {G(s)\over s}\,\d s
				=:L,
		$$
		where $L$ is independent of $K$. Since $\log K\le L$ for arbitrary $K$, $L$
		has no other option but to equal infinity.
		
		To prove (ii)$\Rightarrow$(iii), let $K\ge 1$ be fixed and let us define
		$f(t)=\int_1^t G(Ks){\d s\over s}$ and $g(t)=\int_1^t G(s){\d s\over s}$.
		Then both $f$ and $g$ are continuous and have derivatives, namely
		$f'(t)=G(Kt)/t$, $g'(t)=G(t)/t$. Since (ii) holds, it has to be
		$g(t)\to\infty$ as $t\to\infty$.  Using Proposition~\ref{prop:Lhop},
		we get
		\begin{align*}
				0&\le\liminf_{t\to\infty} {G(Kt) \over G(t)} - 1
					\\
				&\le\liminf_{t\to\infty} {\int_1^t G(Ks){\d s\over s} \over \int_1^t G(s){\d s\over s}} - 1
					\\
				&\le\liminf_{t\to\infty} {\int_K^{Kt} G(s){\d s\over s} - \int_1^t G(s){\d s\over s}\over \int_1^t G(s){\d s\over s}}
					\\
				&\le\liminf_{t\to\infty} {G(t) \over \int_1^t G(s){\d s\over s}}{\int_t^{Kt} G(s){\d s\over s}\over G(t)}.
		\end{align*}
		Since $\liminf_{t\to\infty} {G(t) / \int_1^t G(s){\d s\over s}}=0$, it
		suffices to show that ${1 \over G(t)}\int_t^{Kt} G(s){\d s\over s}$ is
		bounded. To this end we use the fact that $G$ is nondecreasing and, due to
		$G\in\Delta_2$, there is some $c>0$ such that $G(Kt)\le c\,G(t)$ for big
		$t$. For such a $t$ we have
		$$
			{1 \over G(t)} \int_t^{Kt} G(s){\d s\over s}
			\le {G(Kt) \over G(t)} \int_t^{Kt} {\d s\over s}
			\le c \log K.
		$$
		\end{proof}

	\begin{proof}[Proof of Theorem~A]
		The equivalence of (ii) and (v) follows directly from Theorem~B.

		The condition (v) holds if and only if (iv) holds thanks to the consequence of \citep[Theorem A]{KPP}.

		In order to show (i)$\Rightarrow$(v) assume that (v) is not satisfied,\ i.e.,
		$$
			\limsup_{t\to\infty}{1\over G(Kt)}\int_1^t{G(s)\over s}\,\d s=\infty
		$$
		for some constant $K>1$,
		where $G(t)=\widetilde{B}(t)\,t^{1/(\alpha -1)}$.
		Now for any Orlicz space $L^A(0,1)$ satisfying
		$H_\alpha^\beta\colon L^A(0,1)\to M(0,1)$ there exists a constant $C_A$ such that
		$$
			\int_1^t {\widetilde{A}(s)\over s^{{1/(1-\alpha)}+1}}\,\d s
				\lesssim {\widetilde{B}(C_{\!A}t)\over t^{1/(1-\alpha)}},
					\quad t\in(2,\infty)
		$$
		due to Theorem~B. If the function $G$ is unbounded,
		then Theorem~\ref{thm:construct} ensures the existence of a Young function
		$A_1$ such that the space $L^{A_1}(0,1)$ is strictly larger than $L^{A}(0,1)$
		and still renders the inequality above true, with possibly different constants.
		Now again by Theorem~B one has
		$H_\alpha^\beta\colon L^{A_1}(0,1)\to M(0,1)$ and no optimal Orlicz domain exists.
		This contradicts (i).
		
		The case when the function $G$ is bounded and hence equivalent to a constant function,
		corresponds to the situation when $M(0,1)=L^{\infty}(0,1)$. Then no optimal Orlicz
		domain exists thanks to a different construction described in \citep[Theorem 6.4]{CP}.
		This also contradicts (i).

		To prove (iii)$\Rightarrow$(i) we claim that $L^B(0,1)$ is among the Orlicz spaces $L^A(0,1)$
		the largest space rendering
		$$
			H_\alpha^\beta\colon L^A(0,1)\to M(0,1).
		$$
		Indeed let $L^A(0,1)$ be any of such spaces. By the optimality of $X(0,1)$,
		we have $L^A(0,1)\subseteq X(0,1)$ and thus we have the inequality between
		appropriate fundamental functions
		$$
			\varphi_{X}(t)\lesssim \varphi_{L^{A}}(t),\quad t\in (0,1).
		$$
		Since the space $L^B(0,1)$ is defined in a way that its fundamental function coincides
		with $\varphi_X$, one gets that $\varphi_{L^B}(t)\lesssim \varphi_{L^{A}}(t)$
		which implies $A(t)\le B(Ct)$ for some $C>0$, hence
		$L^A(0,1)\subseteq L^B(0,1)$ and $L^B(0,1)$ is optimal.
		
		The equivalence of (ii) and (iii) follows directly from the definition of the
		optimal r.i.\ space, and the equivalence of (v) and (vi) has already been proved in
		Theorem~\ref{thm:eqcondition}.
		\end{proof}

	\begin{remark} Note that the proof of the implication (iii)$\Rightarrow$(i) does  not depend
		on the target space, so it can be used to prove the optimality in positive
		cases for any r.i.\ target space $Y$.
		\end{remark}

\section{Examples and applications} \label{sec:app}

\subsection{Sobolev embeddings on John domains}

We begin by the easiest case of Sobolev embeddings, namely those acting on John domains.
We will use the reduction theorem from \cite{CPS}.
Recall that a bounded open 
set $\Omega$ in $\R^n$ is called a John domain if there exist a constant $c\in (0,1)$ and a point $x_0\in\Omega$
such that for every $x\in\Omega$ there exists a rectifiable curve $\varpi\colon [0,l]\to\Omega$, parameterized 
by arclength, such that $\varpi(0)=x$, $\varphi(l)=x_0$, and
$$
	\dist\bigl(\varpi(r),\partial\Omega\bigr) \le cr,\quad r\in[0,l].
$$

We will use the reduction principle for John domains 
proved in \citep[Theorem 6.1]{CPS}. It can be read as follows.

		Let $n\in\N$, $n\geq 2$, and let $m\in\N$.
		Assume that $\Omega$ is a John domain in $\R^n$. Let $\| \cdot
		\|_{X(0,1)}$ and $\| \cdot \|_{Y(0,1)}$ be rearrangement-invariant
		function norms. Then the following assertions are equivalent.

		\begin{enumerate}[(i)]
		\item The Hardy type inequality
		$$
			\bigl\| H_{m/n}^1 f \bigr\|_{ Y(0,1)}
				\leq C \|f\|_{X(0,1)}
		$$
		holds for some constant $C$ and for every nonnegative $f\in
		X(0,1)$.

		\item The Sobolev embedding
		$$
			W^mX(\Omega) \embed Y(\Omega)
		$$
		holds.
		\end{enumerate}

Recall that
$$
	W^mX(\Omega) =
		\Bigl\{u\in\mathcal{M}(\Omega),\; u\text{ is $m$-times weakly differentiable in $\Omega$ and }
			\bigl|\nabla^k u\bigr|\in X(\Omega),\, k=0,1,\ldots, m\Bigr\}.
$$
Here, $\nabla^k u$ denotes the vector of all $k$-th order weak derivatives of
$u$ and $\nabla^0 u = u$.  The norm is then given by
$$
	\|u\|_{W^mX(\Omega)} :=
		\sum_{k=0}^{m} \,\bigl\|\nabla^k u \bigr\|_{X(\Omega)}.
$$

Now, one can select any r.i.\ space $X(0,1)$ and seek to find an optimal range
space. Let $\Omega$ be a John domain in~$\R^n$, $m\in\N$ such that $m<n$ and
consider the spaces $L^p\log^qL(\Omega)$ or $L^p\log^q\log L(\Omega)$, $p>1$
and $q\in\R$ or $p=1$ and $q\ge 0$. 
By \citep[Theorem~6.12 and Example~6.14]{CPS} (see also \citep[Examples~1 and~2]{AC}), we have
$$
	W^mL^p\log^qL(\Omega) \embed
	\begin{cases}
		L^{np\over n-mp}\log^{nq\over n-mp}L(\Omega),
			& 1\le p < {n\over m}, \\
		\exp L^{n\over n-m(1+q)}(\Omega),
			& p={n\over m}, q<{n\over m} -1, \\
		\exp\exp L^{n\over n-m}(\Omega),
			& p={n\over m}, q={n\over m} -1, \\
		L^\infty(\Omega),
			& p>{n\over m} \text{ or } p={n\over m}, q>{n\over m}-1,
	\end{cases}
$$
and
$$
	W^mL^p\log^q\log L(\Omega) \embed
	\begin{cases}
		L^{np\over n-mp}\log^{nq\over n-mp}\log L(\Omega),
			& 1\le p < {n\over m}, \\
		\exp \bigl(L^{n\over n-m}\log^{mq\over n-m}L\bigr)(\Omega),
			& p={n\over m}, \\
		L^\infty(\Omega),
			& p>{n\over m},
	\end{cases}
$$
and all the targets are optimal among all Orlicz spaces.

Let us investigate the optimal Orlicz domains.

	\begin{example} \label{ex:applications}
	\noindent a) Case $Y(\Omega)=L^{np\over n-mp}\log^{nq\over n-mp}L(\Omega)$, $1\le p<{n\over m}$.
		The space $Y(\Omega)$ is not a
		Marcinkiewicz space, but instead of $Y(\Omega)$ we can take the endpoint space $M_\varphi(\Omega)$
		with the same fundamental function as the space $Y(\Omega)$, namely
		$$
			\varphi(t)=t^{n-mp\over np}\log^{q\over p}\bigl(\textstyle {2\over t}\bigr),
				\quad t\in(0,1).
		$$
		Now, thanks to reduction principle the problem of Sobolev embedding
		is equivalent to the boundedness of the operator~$H^1_{m/n}$.

		By Theorem~A, the optimal Orlicz domain space exists
		if and only if $H^1_{m/n}\colon L^B(0,1)\to M_\varphi(0,1)$ where, after some calculations,
		$B(t)=t^p\log^q t$ for large $t$. This is however the same as
		$W^mL^p\log^qL(\Omega)\embed M_\varphi(\Omega)$ which
		is satisfied since
		$$
			W^mL^p\log^qL(\Omega)\embed Y(\Omega)\subseteq M_\varphi(\Omega).
		$$
		Hence both domain and range spaces in
		$$
			W^mL^p\log^qL(\Omega)\embed L^{np\over n-mp}\log^{nq\over n-mp}L(\Omega)
		$$
		are optimal among Orlicz spaces.

	\noindent b) Case $Y(\Omega)=\exp L^{n\over n-m(1+q)}(\Omega)$, $p={n\over m}$, $q<{n\over m}-1$.
		The Orlicz space $Y(\Omega)$ coincides with the Marcinkiewicz endpoint
		space $M_\varphi(\Omega)$ (cf.\ \eqref{eq:orlismarc}) where
		$$
			\varphi(t)=\log^{{m\over n}(1+q)-1}\bigl(\textstyle {2\over t}\bigr),
				\quad t\in(0,1).
		$$
		Again by reduction principle and Theorem~A
		we compute the Young function $B$ and test the boundedness of $H^1_{m/n}$
		on the space $L^B(0,1)$ or check the condition using the function $G(t)=\widetilde{B}(t)\,t^{n/(m-n)}$.
		We get
		$$
			G(t)=\log^{n-m(1+q)\over n-m}(t),
				\quad t\in (1,\infty).
		$$
		Since
		$$
			\liminf_{t\to\infty} {\log^{n-m(1+q)\over n-m}(Ct)\over \log^{n-m(1+q)\over n-m}(t) }=1,
				\quad\text{for every } C\ge 1
		$$
		and $G$ satisfies the $\Delta_2$ condition we conclude that
		the space $L^p\log^q L(\Omega)$ is not the largest Orlicz space
		rendering
		$$
			W^mL^p\log^q L(\Omega)\embed \exp L^{{n\over n-m(1+q)}}(\Omega)
		$$
		and no such Orlicz space exists.
		Just to compare, the space $L^B(0,1)$ from Theorem~A
		is $L^p\log ^{1+q-p} L(0,1)$ which is too large.
		\end{example}

These two examples give us the outline how to use our results to investigate
the optimal Orlicz domains.  Other cases can be done in an analogous way and we
just present the results (see Table~\ref{tab:1}).  Observe that the optimal
Orlicz domains exist in subcritical cases, i.e.\ when $1\le p<{n\over m}$,
otherwise every Orlicz domain space can be improved.

\begin{table*}
\centering
\renewcommand{\arraystretch}{1.4}
	\begin{tabular}{llll}
	$Y(\Omega)$
		& 
		& $L^B(\Omega)$
		& $G(t)$
		\\
	\hline
	$L^{np\over n-mp}\log^{nq\over n-mp}L(\Omega)$
		& $1\le p< {n\over m}$
		& $L^p\log^qL(\Omega)$
		& $t^{{n\over m-n}+{p\over p-1}}\log^{q\over 1-p}(t)$
		\\
	$L^{np\over n-mp}\log^{nq\over n-mp}\log L(\Omega)$
		& $1\le p< {n\over m}$
		& $L^p\log^q\log L(\Omega)$
		& $t^{{n\over m-n}+{p\over p-1}}\log^{q\over 1-p}\log(t)$
		\\
	$\exp L^{n\over n-m(1+q)}(\Omega)$
		& $p = {n\over m}$, $q<{n\over m}-1$
		&	$L^p\log^{1+q-p}L(\Omega)$
		& $\log^{n-m(1+q)\over n-m}(t)$
		\\
	$\exp\exp L^{n\over n-m}(\Omega)$
		& $p = {n\over m}$, $q={n\over m}-1$
		&	$L^p\log^{-q}\log L(\Omega)$
		& $\log\log (t)$
		\\
	$\exp \bigl(L^{n\over n-m}\log^{mq\over n-m}L\bigr)(\Omega)$
		& $p = {n\over m}$
		&	$L^p\log^{1-p} L\log^{q}\log L(\Omega)$
		& $\log(t)\log^{mq\over m-n}\log(t)$
		\\
	$L^\infty(\Omega)$
		&
		& $L^{n\over m}(\Omega)$
		& $1$
		\\
	\end{tabular}
\caption{Application of Theorem~A for the operator $H^{1}_{m/n}$ and John domain}
\label{tab:1}
\end{table*}

\subsection{Sobolev embeddings on Maz'ya classes}

Our next applications are in Sobolev embeddings on wider family of subsets so-called Maz'ya classes.

Let $\Omega$ be a domain in $\R^n$, $n\ge 2$, with a normalized Lebesgue measure, i.e.\ $|\Omega|=1$.
Define the perimeter of a measurable set $E$ in $\Omega$
$$
	P(E,\Omega)=\mathcal{H}^{n-1}(\Omega\cap \partial^M E)
$$
where $\partial^M E$ denotes the essential boundary of $E$.
The isoperimetric function $I_\Omega\colon [0,1]\to [0,\infty]$ of $\Omega$ is then
given by
$$
	I_\Omega(s)=\inf\bigl\{ P(E,\Omega),\;E\subseteq \Omega,\, s\le |E|\le \textstyle {1\over 2}\bigr\},
		\quad s\in\bigl[0,{1\over 2} \bigr]
$$
and $I_\Omega(s)=I_\Omega(1-s)$ if $s\in({1\over 2},1]$.

Given $\alpha \in[{1\over n'} , 1]$,  we denote by $\mathcal{J}_\alpha$
the~\df{Maz'ya class} of all  Euclidean domains $\Omega $
 in $\R^n$ such that
$$
I_\Omega (s) \geq C s^\alpha \quad \text{for }s \in \bigl[0,\textstyle{1\over 2}\bigr]
$$
for some positive constant $C$.

The reduction theorem in the class $\cal J_\alpha$
\citep[Theorem 6.4]{CPS} takes the following form.

		Let $n\in\N$, $n\geq 2$,  $m\in\N$, and $\alpha \in [\frac 1{n'}, 1)$.
		Let $\| \cdot \|_{X(0,1)}$ and $\| \cdot \|_{Y(0,1)}$ be rearrangement-invariant function norms.
		Assume that there exists a constant $C$ such that
		\begin{equation}
			\bigl\| H_{m(1-\alpha)}^1 f \bigr\|_{Y(0,1)}
				\leq C \| f \|_{X(0,1)}
					\label{E:eucl_condition-div}
		\end{equation}
		for every nonnegative $f\in X(0,1)$.  Then the Sobolev embedding
		\begin{equation}
			W^mX(\Omega ) \embed Y(\Omega )
				\label{E:eucl_embedd-div}
		\end{equation}
		holds  for every
		$\Omega \in \mathcal{J}_\alpha$.

		Conversely, if the Sobolev embedding \eqref{E:eucl_embedd-div} holds for every
		$\Omega \in\mathcal{J}_\alpha$, then the inequality \eqref{E:eucl_condition-div} holds.
	
Notice the main difference between this statement and reduction principle for John domains.
In the case of John domains the equivalence of
Sobolev embedding and boundedness of Hardy type operator holds for every
single domain~$\Omega$, while in the Maz'ya classes $\Omega$ has to
range among all domains in $\mathcal{I}_\alpha$.

Let us mention similar examples for Orlicz spaces.
Let $m$ be an integer and $\alpha\in[{1\over n'},1)$ such that $m(1-\alpha)<1$
and assume $p>1$ and $q\in\R$ or $p=1$ and $q\ge 0$.
By \citep[Theorem~6.12 and Example~6.14]{CPS}, we have

$$
W^mL^p{\log}^q L(\Omega ) \embed
\begin{cases}
	L^{\frac p{1-mp(1-\alpha )}}{\log}^\frac q{1-mp(1-\alpha)}L(\Omega ),
		& 1\le p<{1\over m(1-\alpha )},
		\\
	\exp L^{\frac 1{1-(1+q )m(1-\alpha)}} (\Omega ),
		& p={1\over m(1-\alpha)}, q < p-1,
		\\
	\exp\exp L^{\frac 1{1-m(1-\alpha)}} (\Omega ),
		& p={1\over m(1-\alpha )}, q = p-1,
		\\
	L^\infty (\Omega ),
		& p>{1\over m(1-\alpha)} \text{ or } p={1\over m(1-\alpha )}, q > p-1,
		\\
	\end{cases}
$$
and
$$
	W^mL^p\log^q\log L(\Omega) \embed
	\begin{cases}
		L^{p\over 1-mp(1-\alpha)}\log^{q\over 1-mp(1-\alpha)}\log L(\Omega),
			& 1\le p < {1\over m(1-\alpha)}, \\
		\exp \bigl(L^{1\over 1-m(1-\alpha)}\log^{mq(1-\alpha)\over 1-m(1-\alpha)}L\bigr)(\Omega),
			& p={1\over m(1-\alpha)}, \\
		L^\infty(\Omega),
			& p>{1\over m(1-\alpha)}.
	\end{cases}
$$
Moreover, the target spaces are optimal among all
Orlicz spaces, as $\Omega$ ranges in $\mathcal{J}_\alpha$.

Now one can apply Theorem~A for the operator $H^1_{m(1-\alpha)}$
in a analogous way as in Example~\ref{ex:applications} to investigate the optimal
Orlicz domains.

As computation shows, in the case $1\le p\le 1/m(1-\alpha)$ the optimality is
attained as $\Omega$ ranges through~$\mathcal{I}_\alpha$. In the remaining examples
there exists some $\Omega$ in $\mathcal{I}_\alpha$ such that any Orlicz domain
space in appropriate Sobolev embedding can be improved (see Table~\ref{tab:2}).

\begin{table*}
\centering
\renewcommand{\arraystretch}{1.4}
\footnotesize
	\begin{tabular}{llll}
	$Y(\Omega)$
		& 
		& $L^B(\Omega)$
		& $G(t)$
		\\ \hline
	$L^{p\over 1-mp(1-\alpha)}\log^{q\over 1-mp(1-\alpha)}L(\Omega)$
		& $1\le p< {1\over m(1-\alpha)}$
		& $L^p\log^qL(\Omega)$
		& $t^{{1\over m(1-\alpha)-1}+{p\over p-1}}\log^{q\over 1-p}(t)$
		\\
	$L^{1p\over 1-mp(1-\alpha)}\log^{1q\over 1-mp(1-\alpha)}\log L(\Omega)$
		& $1\le p< {1\over m(1-\alpha)}$
		& $L^p\log^q\log L(\Omega)$
		& $t^{{1\over m(1-\alpha)-1}+{p\over p-1}}\log^{q\over 1-p}\log(t)$
		\\
	$\exp L^{1\over 1-(1+q)m(1-\alpha)}(\Omega)$
		& $p = {1\over m(1-\alpha)}$, $q<p-1$
		&	$L^p\log^{1+q-p}L(\Omega)$
		& $\log^{1-(1+q)m(1-\alpha)\over 1-m(1-\alpha)}(t)$
		\\
	$\exp\exp L^{1\over 1-m(1-\alpha)}(\Omega)$
		& $p = {1\over m(1-\alpha)}$, $q=p-1$
		&	$L^p\log^{-q}\log L(\Omega)$
		& $\log\log (t)$
		\\
	$\exp \bigl(L^{1\over 1-m(1-\alpha)}\log^{mq(1-\alpha)\over 1-m(1-\alpha)}L\bigr)(\Omega)$
		& $p = {1\over m(1-\alpha)}$
		&	$L^p\log^{1-p} L\log^{q}\log L(\Omega)$
		& $\log(t)\log^{mq(1-\alpha)\over m(1-\alpha)-1}\log(t)$
		\\
	$L^\infty(\Omega)$
		&
		& $L^{m(1-\alpha)}(\Omega)$
		& $1$
		\end{tabular}
\caption{Application of Theorem~A for the operator $H^{1}_{m(1-\alpha)}$ and Maz'ya class}
\label{tab:2}
\end{table*}

\subsection{Sobolev trace embeddings}

Our last application concerns the Sobolev trace embeddings.

An open set $\Omega$ in $\R^n$ is said to have the cone property if there
exists a finite cone $\Lambda$ such that each point in $\Omega$ is the vertex
of a finite cone contained in $\Omega$ and congruent to $\Lambda$.

Given an integer $d$ such that $1\le d\le n$ we denote by $\Omega_d$ the nonempty
intersection of $\Omega$ with a $d$-dimensional affine subspace of $\R^n$.

The reduction principle for trace embeddings	\citep[Theorem 1.3]{CP2} now has
the following form.

		Let $\Omega$ be a bounded open set with cone property in $\R^n$, $n\ge 2$. Assume
		that $m\in\N$ and $d\in\N$ are such that $1\le d\le n$ and $d\ge n-m$.
		Let $\| \cdot \|_{X(0,1)}$ and $\| \cdot \|_{Y(0,1)}$ be rearrangement-invariant function norms.
		Then the following facts are equivalent.
		\begin{enumerate}[(i)]
		\item The inequality
		$$
			\bigl\|H^{n/d}_{m/n} f\bigr\|_{Y(0,1)} \le C \|f\|_{X(0,1)}
		$$
		holds for some constant $C$ and for every nonnegative $f\in X(0,1)$.
		\item The Sobolev trace embedding
		$$
			\Tr\colon W^mX(\Omega) \to Y(\Omega_d)
		$$
		holds.
		\end{enumerate}

Let $\Omega$ be a domain in $\R^n$ with cone property, $m\in\N$, $m<n$, and consider again the spaces
$L^p\log^qL(\Omega)$ or $L^p\log^q\log L(\Omega)$,
$p>1$ and $q\in\R$ or $p=1$ and $q\ge 0$.
By \citep[Theorem~5.2, Example~5.3 and Example~5.4]{CP2}
we have

$$
	\Tr\colon W^mL^p\log^qL(\Omega) \to
	\begin{cases}
		L^{pd\over n-mp}\log^{qd\over n-mp}L(\Omega_d),
			& 1\le p < {n\over m}, \\
		\exp L^{n\over n-m(1+q)}(\Omega_d),
			& p={n\over m}, q<{n\over m} -1, \\
		\exp\exp L^{n\over n-m}(\Omega_d),
			& p={n\over m}, q={n\over m} -1, \\
		L^\infty(\Omega_d),
			& p>{n\over m} \text{ or } p={n\over m}, q>{n\over m}-1,
	\end{cases}
$$
and
$$
	\Tr\colon W^mL^p\log^q\log L(\Omega) \to
	\begin{cases}
		L^{pd\over n-mp}\log^{qd\over n-mp}\log L(\Omega_d),
			& 1\le p < {n\over m}, \\
		\exp \bigl(L^{n\over n-m}\log^{mq\over n-m}L\bigr)(\Omega_d),
			& p={n\over m}, \\
		L^\infty(\Omega_d),
			& p>{n\over m},
	\end{cases}
$$
and the range spaces being optimal in the class of Orlicz spaces.

Now, using Theorem~A for the operator $H^{n/d}_{m/n}$, one can investigate the
optimal Orlicz domains. The situation is almost the same as in case of Sobolev
embedding and hence we just present the results (see Table~\ref{tab:3}).
Naturally, the optimality is attained only in the subcritical cases.

\begin{table*}
\centering
\renewcommand{\arraystretch}{1.4}
\begin{tabular}{llll}
$Y(\Omega)$
	& 
	& $L^B(\Omega)$
	& $G(t)$
	\\
	\hline
$L^{pd\over n-mp}\log^{qd\over n-mp}L(\Omega)$
	& $1\le p< {n\over m}$
	& $L^p\log^qL(\Omega)$
	& $t^{{n\over m-n}+{p\over p-1}}\log^{q\over 1-p}(t)$
	\\
$L^{pd\over n-mp}\log^{qd\over n-mp}\log L(\Omega)$
	& $1\le p< {n\over m}$
	& $L^p\log^q\log L(\Omega)$
	& $t^{{n\over m-n}+{p\over p-1}}\log^{q\over 1-p}\log(t)$
	\\
$\exp L^{n\over n-m(1+q)}(\Omega)$
	& $p = {n\over m}$, $q<{n\over m}-1$
	&	$L^p\log^{1+q-p}L(\Omega)$
	& $\log^{n-m(1+q)\over n-m}(t)$
	\\
$\exp\exp L^{n\over n-m}(\Omega)$
	& $p = {n\over m}$, $q={n\over m}-1$
	&	$L^p\log^{-q}\log L(\Omega)$
	& $\log\log (t)$
	\\
$\exp \bigl(L^{n\over n-m}\log^{mq\over n-m}L\bigr)(\Omega)$
	& $p = {n\over m}$
	&	$L^p\log^{1-p} L\log^{q}\log L(\Omega)$
	& $\log(t)\log^{mq\over m-n}\log(t)$
	\\
$L^\infty(\Omega)$
	&
	& $L^{n\over m}(\Omega)$
	& $1$
\end{tabular}
\caption{Application of Theorem~A for the operator $H^{n/d}_{m/n}$ and domain
with cone property} \label{tab:3}
\end{table*}

\subsection{Extension to other r.i.\ target spaces}

As we have seen in Example~\ref{ex:applications} a) in the case when the optimality
is attained one can extend the positive result to other r.i.\ target spaces.
Let us now look closer on this phenomenon.

Let $\alpha$ and $\beta$ be fixed and let $L^A(0,1)$ be an optimal Orlicz space
rendering the relation
$$
	H_\alpha^\beta\colon L^A(0,1) \to M(0,1)
$$
true, where $M(0,1)$ is a given Marcinkiewicz endpoint space.
We know from Theorem~A that not every Orlicz space is an optimal domain space;
such spaces are exactly those for which the supremum operator $S_\alpha$
is bounded on their associate space.

However, we can go the opposite direction. Suppose that $L^A(0,1)$ is a given
Orlicz space such that the operator $S_\alpha$ is bounded on $L^{\widetilde{A}}(0,1)$.
Now thanks to the result of \cite{KP2}, the operator $S_\alpha$
is bounded on some r.i.\ space $X'(0,1)$ if and only if
the $X(0,1)$ is optimal r.i.\ domain space for some r.i.\ target space.
By Proposition~\ref{prop:opt_ri_range}, the norm of the best r.i.\ target
space, say $Y_{L^A}(0,1)$,
is given by
$$
	\|f\|_{(Y_{L^{A}})'(0,1)} = \biggl\| t^{\alpha -1} \int_0^{t^{1\over\beta}} f^*(s)\,\d s \biggr\|_{L^{\widetilde{A}}(0,1)}.
$$
The fundamental function of $Y_{L^A}$, say $\varphi$, then satisfies (cf. \eqref{eq:chareq})
\begin{align*}
	\varphi(t)
		&\simeq t^{\beta(1-\alpha)}\,E_{\alpha -1}^{-1}(t^{-\beta})
			\\
		&\simeq t^{\beta(1-\alpha)}\,\widetilde{A}^{-1}(Kt^{-\beta}).
\end{align*}
Moreover, to the given Orlicz space $L^A(0,1)$, we are able to compute the appropriate Marcinkiewicz space $M(0,1)$.
If we take a look at the proof of Theorem~B again,
we observe that in the case of optimality, the inequality \eqref{eq:phiMvsE} becomes
actually equivalence, therefore the fundamental function of $M(0,1)$ is equivalent
to $\varphi$.

Consequently, we obtain that the space $L^A(0,1)$ is the optimal Orlicz domain
for every r.i.\ space $Y(0,1)$ satisfying
$$
	Y_{L^A}(0,1) \subseteq Y(0,1) \subseteq M(0,1).
$$

\begin{example} \label{ex:extension_to_Y}
Let $\Omega$ be a bounded Lipschitz domain in $\R^n$, $n\ge 2$, and $1<p<n$.
One can easily observe that $S_{1/n}$ is bounded on $L^{p'}(0,1)$, where
$p'=p/(p-1)$.
Then the optimal r.i.\ range space for the operator $H_{1/n}^1$ is the
Lorentz space $L^{p\ast,\,p}(0,1)$, where $p^* = np/(n-p)$. Its fundamental
function is equivalent to the power function $t^{1/p\ast}$ and therefore,
for every fixed $q\in [p,\infty]$,
the Lebesgue space $L^p(\Omega)$ is the largest Orlicz space which renders
the embedding
$$
	W^1L^p(\Omega) \embed L^{p\ast,\,q}(\Omega)
$$
true.

Similarly, for a given integer $1<m<n$, $1<p<n/m$
and $q\in[p,\infty]$
we obtain that $L^p(\Omega)$ is the largest Orlicz space in
$$
	W^m L^p(\Omega) \embed L^{{np\over n-mp},q}(\Omega).
$$
\end{example}

\section*{Acknowledgement}

I would like to express my thanks to Lubo\v s Pick for careful reading of this paper and many
valuable comments and suggestions. I also thank the referee for her/his useful comments.

\section*{References}

\bibliography{arxiv}

\end{document}